\newcommand{\ig}[2]{\vcenter{\xy (0,0)*{\includegraphics[scale=#1]{#2}} \endxy}}
\definecolor{myred}{rgb}{0.75,0,0}
\definecolor{mygreen}{rgb}{0,0.5,0}
\definecolor{myblue}{rgb}{0,0,0.65}
\newtheorem{thm}{Theorem}[section]
\newtheorem{lemma}[thm]{Lemma}
\newtheorem{prop}[thm]{Proposition}
\newtheorem*{prop*}{Proposition}
\theoremstyle{definition}
\newtheorem{defn}[thm]{Definition}
\newtheorem{ex}[thm]{Example}
\newtheorem{example}[thm]{Example}
\theoremstyle{remark}
\newtheorem{remark}[thm]{Remark}
\newtheorem{question}[thm]{Question}
\numberwithin{equation}{section}
  \def\hg{{\mathfrak h}}
    \def\NM{{\mathbb{N}}}
    \def\ZM{{\mathbb{Z}}}
    \def\CC{{\mathcal{C}}}
    \def\DC{{\mathcal{D}}}
  \def\eb{{\mathbf e}}  
  \def\fb{{\mathbf f}}  
\def\HB{{\mathbf H}}
    \def\OC{{\mathcal{O}}}
\def\a{\alpha}
\def\l{\lambda}
\def\L{\Lambda}
\let\phi=\varphi
\def\Z{{\mathbbm Z}}
\def\1{\mathbbm{1}}
\newcommand{\un}{\underline}
\newcommand{\mf}[1]{\mathfrak{#1}}
\newcommand{\ot}{\otimes}
\newcommand{\pa}{\partial}
\newcommand{\co}{\colon}
\renewcommand{\to}{\rightarrow}
\newcommand\simto{\xrightarrow{
   \,\smash{\raisebox{-0.65ex}{\ensuremath{\scriptstyle\sim}}}\,}}
\newcommand{\define}{\stackrel{\mbox{\scriptsize{def}}}{=}}
\newcommand{\refequal}[1]{\xy {\ar@{=}^{#1}
(-1,0)*{};(1,0)*{}};
\endxy}
\newcommand{\maps}{\colon}
\newcommand{\Hom}{{\rm Hom}}
\newcommand{\End}{{\rm End}}
\newcommand{\END}{{\rm END}}
\newcommand{\Ob}{\textrm{Ob}}
\newcommand{\Ucat}{\mathcal{U}}
\newcommand{\Us}{\mathcal{U}^{\ast}}
\newcommand{\Tr}{{\rm Tr}}
\newcommand{\TLC}{\mathcal{TL}}
\newcommand{\LL}{LL}
\newcommand{\LLL}{\mathbb{LL}}
\newcommand{\lbub}{\xybox{
  (-3,0)*{};(3,0)*{} **\crv{(-3,4) & (3,4)} ?(.0)*\dir{>};
  (3,0)*{};(-3,0)*{} **\crv{(3,-4) & (-3,-4)} ?(.1)*{*};
  (-5,-5)*{}; (5,5)*{};
}}
\newcommand{\rbub}{\xybox{
  (-3,0)*{};(3,0)*{} **\crv{(-3,4) & (3,4)} ?(.0)*\dir{<};
  (3,0)*{};(-3,0)*{} **\crv{(3,-4) & (-3,-4)} ?(.1)*{*};
  (-5,-5)*{}; (5,5)*{};
}}
\begin{document}

\begin{abstract} We compute the trace decategorification of the Hecke category for an arbitrary Coxeter group.  More generally, we introduce the notion of a strictly object-adapted cellular category and calculate the trace for such categories.
\end{abstract}

\title{Trace decategorification of the Hecke category}

\author{Ben Elias} \address{University of Oregon, Eugene}

\author{Aaron D. Lauda} \address{University of Southern California, Los Angeles}

\maketitle

\newcommand{\bigb}[1]{
\begin{tikzpicture}
    \node[draw, thick, fill=blue!20,rounded corners=4pt,inner sep=3pt] () at (0,.5) {\small$#1$};
\end{tikzpicture}}
\newcommand{\UA}{{_{\mathcal{A}}\dot{{\bf U}}}}
\newcommand{\onen}{{\mathbf 1}_{n}}
\newcommand{\UcatD}{\dot{\Ucat}}
\def\nn{\notag}


\section{Introduction}
\label{sec-intro}

The geometrization of representation theoretic objects has proven to be a powerful tool in representation theory, revealing deep insights into the structure of these algebras and their representations.  Geometrization leads to important consequences like integrality and the existence of canonical bases.
A modern viewpoint is that geometrization is a precursor to categorification.  Rather than studying the homology or $K$-theory of a variety $X$,  one can alternatively consider the Grothendieck group of an appropriate category of sheaves on $X$.  In this context, algebra generators are lifted to functors, and relations between generators hold up to natural isomorphisms of functors.  One of the goals of categorification is to identify the algebraic objects that govern the natural transformations between such functors.

One of the advantages categorification and its purely algebraic formulations provide is that one can often surpass the geometric realm and vastly extend various results.  For example, the categorification of positive parts of quantum groups $U^+_q(\mathfrak{g})$ associated non-symmetric Cartan data give rise to bases for the $U^+_q(\mathfrak{g})$ which are necessarily positive~\cite{KL2}; the canonical bases associated to non-symmetric Cartan data are known to violate positivity, so in this instance, categorification gives rise to new results not accessible from geometry.

For the purposes of this paper, we will focus on another example where algebraic categorifications significantly extend geometric results. The Hecke algebra associated to the Weyl group of
a reductive algebraic group was geometrically realized in the seminal work of Kazhdan and Lusztig~\cite{KaL1}. Soergel introduced an algebraic reinterpretation of this geometric
construction that allowed for the categorification of the Hecke algebra associated to an arbitrary Coxeter system~\cite{Soe3}. He constructed a family of graded bimodules over a polynomial
ring, which form an additive category now called the category of Soergel bimodules, and proved that the split Grothendieck group of this category is the Hecke algebra. For Weyl groups, he
provided a fully ext-faithful functor (i.e. sending the graded extension space isomorphically to the graded hom space) from semisimple perverse sheaves (in the category considered by
Kazhdan and Lusztig) to Soergel bimodules. A further generalization of Soergel bimodules is the diagrammatic category $\DC$ constructed by the first author and Williamson~\cite{EWGR4SB}.


In the geometric realm, one can apply a range of ``decategorification" functors, ranging from various flavors of (equivariant) (co)homology and $K$-theory to arbitrary generalized
cohomology theories. In many instances, relations between algebraic objects can be realized geometrically by relating these decategorification functors. For example, the classical Chern
character map relates $K_0$ to a completion of $H^{\ast}(X)$. Applied to various geometrizations, the Chern character relates the loop algebra $U_q(L\mf{g})$ with Yangians
$Y_{\hbar}(\mf{g})$, or the affine Hecke algebra with the graded Hecke algebra. In each case, the algebraic objects arising from $K$-theory and (co)homology are related  via the (geometric) Chern character.

Recently, it has become clear that categorification provides an algebraic analog of the Chern character.
Given an additive category $\CC$ one can consider its split Grothendieck group $K_0(\CC)$.  Alternatively, one can consider its \emph{trace decategorification} $\Tr(\CC)$ given by the quotient of $\End(\CC) = \oplus_{X \in \Ob(\CC)} \End(X)$ by the \emph{trace relation}
\begin{equation} \label{tracerelation}
fg - gf = 0
\end{equation}
for any $f \in \Hom(X,Y)$ and $g \in \Hom(Y,X)$ with $X,Y \in \Ob(\CC)$.  The trace is sometimes called the cocenter or 0th-Hochschild homology.  There is always a \emph{Chern character} map
\[
 h_{\CC} \maps K_0(\CC) \to  \Tr(\CC)
\]
sending the usual Grothendieck group $K_0(\CC)$ of $\CC$ (or of the Karoubi envelope $\dot{\CC}$, if $\CC$ is not already Karoubian) to $\Tr(\CC)$; it sends the class of an object $[X]$ to the image of the identity map $1_X$. When $\CC$ is monoidal, both $K_0(\CC)$ and $\Tr(\CC)$ have the structure of a ring.

When $\CC$ is graded, there are two natural things one can do:
\begin{itemize}
 \item Consider $\Tr(\CC)$ as stated, where $\End(X)$ refers to endomorphisms of degree $0$. The resulting trace decategorification will naturally be a $\ZM[q,q^{-1}]$-module, consistent with the usual $\ZM[q,q^{-1}]$-module structure on the Grothendieck group.
 \item Consider the category $\CC^*$ whose endomorphism rings are the graded vector spaces $\END(X)$ given by endomorphisms of all degrees. In $\CC^*$ all grading shifts of an object are isomorphic, so that the Chern character map $[\CC] \to \Tr(\CC^*)$ will factor through $q=1$. Despite this, $\Tr(\CC^*)$ tends to be
richer than $\Tr(\CC)$, because it detects endomorphisms of nonzero degree.
\end{itemize}

The trace decategorification has been computed in the context of several well-studied categorifications~\cite{CLLS, BHLW, BHLZ,Marco,BGHL,SVV}. An elementary introduction to the to the trace decategorification functor can be found in \cite{BGHL}. The papers \cite{BHLZ,BHLW,SVV} treat the case of the categorified quantum group $\Ucat_Q(\mf{g})$ and its representations.  It was previously established that $K_0(\Ucat_Q^{\ast}(\mf{g}))$ is isomorphic to  Lusztig's integral idempotent form of quantum enveloping algebra of $\mf{g}$.  In \cite{BHLZ,BHLW,SVV} the trace $\Tr(\Ucat_Q^{\ast}(\mf{g}))$ is  identified with the current algebra $U(\mf{g}[t])$ associated to $\mf{g}$. In a closely related story, the traces of various Heisenberg categories are identified with $W$-algebras~\cite{CLLS,SVV}.

In this paper we study the trace decategorification of the Hecke category. Here, the \emph{Hecke category} will refer to the graded, additive, monoidal category $\DC = \DC(W, S,\hg)$,
described diagrammatically by generators and relations by Elias-Williamson in \cite{EWGR4SB}. This diagrammatic presentation was given earlier in special cases by Elias-Khovanov \cite{EKh},
Elias \cite{EDihedral}, and Libedinsky \cite{LibRA}. There is one such category $\DC$ for each Coxeter system $(W,S)$ and \emph{realization} $\hg$, a realization being roughly a reflection
representation equipped with a choice of simple roots and coroots. It is discussed at length in \cite{EWGR4SB} how the category $\DC$ agrees with all other models of the Hecke category
(e.g. equivariant perverse sheaves or parity sheaves on the flag variety, Soergel bimodules, etc) for realizations where these other models are well-behaved, and has the correct behavior
even when these other models break down. For a detailed introduction to the Hecke category and its long history, please see the introduction to \cite{EWGR4SB}.

However, $\DC$ has another structure which allows for a simple computation of its trace decategorification: it is a strictly object-adapted cellular category, or SOACC. Libedinsky
\cite{LibLL} defined a basis of morphisms, known as \emph{Libedinsky's light leaves}; in \cite{EWGR4SB} it was shown that light leaves form a cellular basis for $\DC$. In fact, though the
terminology was not used there, it was shown in \cite{EWGR4SB} that $\DC$ is an SOACC, which is roughly a cellular category where the cellular basis
factors in a compatible way as compositions of two ``trapezoidal" maps (see \cite[\S 1.5]{EWGR4SB}). Using this factorization and \eqref{tracerelation}, one can show that identity maps span
the trace decategorification, and that the Chern character map is an isomorphism after base change. The proof of this result fits into a much more general framework. Section
\ref{sec-cellcat} discusses strictly object-adapted cellular categories and their trace decategorifications. This result immediately computes the trace of $\DC$ as a vector space, though it
does not compute the ring structure.

One has an isomorphism $[\dot{\DC}] \cong \HB$ from the Grothendieck group of the Karoubi envelope to the Hecke algebra of $W$. The Chern character map gives an isomorphism
of rings $$\HB \ot_\ZM \Bbbk \simto \Tr(\DC),$$ where $\Bbbk$ is the (commutative complete local) ring over which $\hg$ is defined. For $\DC^*$, the Chern character gives an isomorphism
$$\ZM[W] \ot_\ZM R \simto \Tr(\DC^*),$$ where $R = \Bbbk[\hg] = \END(\1)$ is the polynomial ring over $\hg$, and the endomorphism ring of the monoidal identity. However, this is not an
isomorphism of rings. We compute the ring structure in Section \ref{sec-traceofD} and show that $$\ZM[W] \ltimes R \cong \Tr(\DC^*).$$ These results are given in Theorem \ref{mainthm1} and
Theorem \ref{mainthm2}.

The arguments used in the computation of the trace for a SOACC immediately generalize to a more general class of categories we call \emph{fibred} strictly object-adapted cellular categories.   These categories are an appropriate abstraction in which the methods developed in \cite{BHLZ} can be used to explicitly compute the trace.


There is an action of the trace of a monoidal category on the center of that category; in Theorem \ref{mainthm3}, we show for $\DC$ that this is the sign representation of $\HB =
[\dot{\DC}]$ induced to either $\Tr(\DC)$ or $\Tr(\DC^*)$. There is an action of the trace of a monoidal category on the trace of any categorical representation. For $\DC$, a broad class of
interesting categorical representations can be obtained using cellular subquotients. However, these subquotients are not well understood in general, so we do not compute their traces here.
It should be interesting to compute the trace decategorification in known cases, such as the cell modules constructed in \cite{ETemperley} for two-row partitions in type $A$.

%
%
%

\subsection*{Acknowledgements}
A.D.L. was partially supported by NSF grant DMS-1255334 and by the John Templeton Foundation.  Both authors are grateful to Anna Beliakova and Kazuo Habiro for helpful discussions on trace categorifications.

\section{Strongly object-adapted cellular categories}
\label{sec-cellcat}

Cellular categories were defined by Westbury in \cite{Westbury} as a generalization of cellular algebras. Inside a cellular category, the morphism space $\Hom(X,Y)$ between any two objects
has a fixed basis $$\{c^\l_{S,T}\} \textrm{ for } \l \in \L, S \in M(\l,Y), T \in E(X,\l)$$ parametrized by a ``cell" $\l$ and two sets $E(X,\l)$ and $M(\l,Y)$. Many of the cellular
categories in nature have the property that $c^\l_{S,T} = c_S \circ c_T$ factors as a composition, passing through an object associated to the cell $\l$; in this context, the sets $E(X,\l)$
and $M(\l,Y)$ can be viewed as subsets of $\Hom(X,\l)$ and $\Hom(\l,Y)$. The Temperley-Lieb category is an exemplar of this phenomenon, as are many similar diagram categories. The category
$\DC$ is a less well-known example.

We encapsulate this common behavior in the notion of a strictly object-adapted cellular category. This definition has appeared previously in unpublished work of the first author, though
surprisingly we have not found it in the known literature.

\subsection{Object-adapted cellular categories}
\label{subsec-defn-SOACC}

Let us recall Westbury's definition of a cellular category.

\begin{defn} A \emph{cellular category} over a commutative base ring $R$ is the following data:
\begin{itemize} \item An $R$-linear category $\CC$.
\item A set $\L$ with a partial order $\le$. We assume that $\le$ has the descending chain condition.
\item For each $X \in \Ob(\CC)$ and each $\l \in \L$, finite sets $M(\l,X)$ and $E(X,\l)$. These two sets are in a
fixed bijection, and we call the map $M(\l,X) \leftrightarrow E(X,\l)$ in either direction by the name $\iota$.
\item A map $c^\l \co M(\l,Y) \times E(X,\l) \to \Hom(X,Y)$. We write $c_{S,T}$ or $c^\l_{S,T}$ instead of $c^\l(S,T)$.
\end{itemize}
We let $\CC_{\le \l}$ denote the $R$-linear subcategory whose morphisms are spanned by those $c^\mu_{S,T}$ with $\mu \le \l$. The conditions below will imply that $\CC_{\le \l}$ is actually an ideal, i.e. it is closed under composition with arbitrary morphisms. We let $\CC_{< \l}$ be defined similarly.

This data satisfies the following conditions: \begin{enumerate}
\item For fixed $X,Y \in \Ob(\CC)$, the set $\{c^\l_{S,T}\}$ forms an $R$-basis for $\Hom(X,Y)$.
\item There is a (unique) $R$-linear
contravariant involution of $\CC$, also denoted $\iota$, which sends $c^\l_{S,T} \mapsto c^\l_{\iota(T), \iota(S)}$.
\item There is a function $\ell \co \Hom(Y,Z) \times M(\l,Y) \times M(\l,Z) \to R$ such that, for every $f \in \Hom(Y,Z)$,
$S \in M(\l,Y)$, one has \begin{equation} \label{oldcellularcondition} f \circ c^\l_{S,T} = \sum_{S' \in M(\l,Z)} \ell(f,S,S') c^\l_{S',T} \textrm{ modulo } \CC_{< \l}. \end{equation} This equation holds for any $T \in E(X,\l)$ for any $X$, and is independent of the choice of $T$ and $X$.
\end{enumerate}

A \emph{cellular algebra} is a cellular category with one object.\end{defn}

\begin{remark} In the literature, the sets $M(\l,X)$ and $E(X,\l)$ are usually identified and go under the name $M(X,\l)$. We separate them for reasons which are soon to become obvious.
\end{remark}

\begin{lemma} The subcategories $\CC_{\le \l}$ and $\CC_{< \l}$ are two-sided ideals. \end{lemma}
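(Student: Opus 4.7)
The plan is to establish the two-sided ideal property by separately proving the left ideal and right ideal conditions, with the second reducing to the first via the involution $\iota$. Since $\CC_{\le \l}$ is already defined as an $R$-linear subcategory (it is closed under addition and scalar multiplication of morphisms), it suffices to check closure under composition with arbitrary morphisms of $\CC$.

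First I would verify the left ideal property. Take any morphism $f \co Y \to Z$ in $\CC$ and any basis element $c^\mu_{S,T} \in \Hom(X,Y)$ with $\mu \le \l$. Condition (3) in the definition of a cellular category applied at cell $\mu$ yields
\[
f \circ c^\mu_{S,T} = \sum_{S' \in M(\mu,Z)} \ell(f,S,S')\, c^\mu_{S',T} \pmod{\CC_{<\mu}}.
\]
The leading sum lies in $\CC_{\le \mu} \subseteq \CC_{\le \l}$, and the error term lies in $\CC_{<\mu} \subseteq \CC_{\le \l}$ (using transitivity of $\le$: any $\nu < \mu \le \l$ satisfies $\nu \le \l$). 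Thus $f \circ c^\mu_{S,T} \in \CC_{\le \l}$, and by $R$-linearity, $f \circ g \in \CC_{\le \l}$ for any $g \in \CC_{\le \l}(X,Y)$.

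Next I would handle the right ideal property using the contravariant involution $\iota$. The key observation is that $\iota$ preserves $\CC_{\le \l}$ setwise: on basis elements $\iota(c^\mu_{S,T}) = c^\mu_{\iota(T),\iota(S)}$, which still has cell label $\mu \le \l$. Thus for $g \co W \to X$ any morphism and $c^\mu_{S,T} \in \Hom(X,Y)$ with $\mu \le \l$, one writes
\[
c^\mu_{S,T} \circ g = \iota\bigl(\iota(g) \circ c^\mu_{\iota(T),\iota(S)}\bigr),
\]
and the inner composition lies in $\CC_{\le \l}$ by the left ideal property just proved, so applying $\iota$ keeps it in $\CC_{\le \l}$.

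The identical argument, with every occurrence of $\le \l$ replaced by $< \l$, handles $\CC_{<\l}$: when $\mu < \l$, both $\CC_{\le \mu}$ and $\CC_{<\mu}$ are contained in $\CC_{<\l}$. I do not anticipate a serious obstacle — the proof is essentially an unpacking of the axioms — but the one subtlety worth flagging is that condition (3) is stated asymmetrically (only for left composition), so the involution $\iota$ is genuinely needed to deduce the right ideal property rather than being a cosmetic step.
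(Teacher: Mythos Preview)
Your argument is correct and follows essentially the same approach as the paper: the left ideal property comes directly from condition \eqref{oldcellularcondition}, and the right ideal property follows by applying the antiinvolution $\iota$ to that same formula. Your write-up simply spells out in detail what the paper compresses into two sentences.
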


\begin{proof} This is clear from \eqref{oldcellularcondition}. Note that, applying $\iota$ to \eqref{oldcellularcondition}, one obtains a similar formula for right multiplication by $f$.
\end{proof}

The notion of a strictly object-adapted cellular category will be a rigidification of that of a cellular category.

\begin{defn} A \emph{strictly object-adapted cellular category} or \emph{SOACC} over a commutative base ring $R$ is the following data:
\begin{itemize} \item An $R$-linear category $\CC$.
\item A set of objects $\L \subset \Ob(\CC)$, and a partial order $\le$ on $\L$. We assume that $\le$ has the descending chain condition.
\item For each $X \in \Ob(\CC)$ and each $\l \in \L$, finite sets $M(\l,X)$ and $E(X,\l)$. These two sets are in a
fixed bijection, and we call the map $M(\l,X) \leftrightarrow E(X,\l)$ in either direction by the name $\iota$.
\item A map $c \co M(\l,X) \to \Hom(\l,X)$ and a map $c \co E(X,\l) \to
\Hom(X,\l)$. We write $c_S$ instead of $c(S)$ (resp. $c_T$ instead of $c(T)$), for $S \in M(\l,X)$ (resp. $T \in E(X,\l)$).
\end{itemize}
For $T \in E(X,\l)$ and $S \in M(\l,Y)$ we let
\begin{equation} \label{cellbasisdef} c^\l_{S,T} \define c_S \circ c_T. \end{equation}
We let $\CC_{\le \l}$, $\CC_{< \l}$ be defined as above.

This data satisfies the following conditions: \begin{enumerate}
\item For fixed $X,Y \in \Ob(\CC)$, the set $\{c^\l_{S,T}\}$ forms an $R$-basis for $\Hom(X,Y)$.
\item There is an $R$-linear
contravariant involution of $\CC$, also denoted $\iota$, for which $\iota(c_S) = c_{\iota(S)}$. This property defines $\iota$ uniquely, because it implies that $\iota \co \Hom(X,Y) \to
\Hom(Y,X)$ sends $c^\l_{S,T} \mapsto c^\l_{\iota(T), \iota(S)}$.
\item There is a function $\ell \co \Hom(Y,Z) \times M(\l,Y) \times M(\l,Z) \to R$ such that, for every $f \in \Hom(Y,Z)$ and
$S \in M(\l,Y)$, one has \begin{equation} \label{cellularcondition} f \circ c_S = \sum_{S' \in M(\l,Z)} \ell(f,S,S') c_S' \textrm{ modulo } \CC_{< \l}. \end{equation}
\item The sets $M(\l,\l) \cong E(\l,\l)$ consist of a single point $*$, and both maps $c_*$ are equal to the identity map $\1_\l \in \End(\l)$.
\end{enumerate} \end{defn}

\begin{defn} A \emph{graded SOACC} is an SOACC for which $R$ is graded, $\CC$ is graded, and the maps $c_S$ are homogeneous. For example, there is a decomposition $$M(\l,X) = \coprod_{k \in
\ZM} M^k(\l,X),$$ where $M^k(\l,X)$ indexes those cellular maps $c_S$ which have degree $k$. We write $\Hom^k(X,Y)$ for homogeneous degree $k$ morphisms from $X$ to $Y$. \end{defn}

It is clear that any SOACC is a cellular category, as condition \eqref{oldcellularcondition} is simply \eqref{cellularcondition} precomposed with $c_T$. Therefore $\CC_{< \l}$ is an ideal.

\begin{lemma} For $\l, \mu \in \L$, the sets $M(\l,\mu)$ and $E(\mu, \l)$ are empty unless $\l \le \mu$. \end{lemma}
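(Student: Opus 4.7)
The plan is to prove the contrapositive: if $S \in M(\l,\mu)$, then $\l \le \mu$. The bijection $\iota$ between $M(\l,\mu)$ and $E(\mu,\l)$ will settle both sets simultaneously. The strategy combines axiom (4)---which provides $c_{*_\mu}=\1_\mu$ and $c_{*_\l}=\1_\l$---with the cellularity axiom (3), applied so as to rewrite a basis element at cell $\l$ modulo $\CC_{<\mu}$.

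Given $S \in M(\l,\mu)$, axiom (4) lets me express
\begin{equation*}
c_{\iota(S)} \;=\; c_{*_\l}\circ c_{\iota(S)} \;=\; c^{\l}_{*_\l,\,\iota(S)} \;\in\; \Hom(\mu,\l),
\end{equation*}
exhibiting $c_{\iota(S)}$ as a cellular basis element at cell $\l$. I would then apply axiom (3) with $f = c_{\iota(S)}\co\mu\to\l$ and the unique element $*_\mu\in M(\mu,\mu)$, using $c_{*_\mu}=\1_\mu$, to obtain
\begin{equation*}
c_{\iota(S)} \;=\; c_{\iota(S)}\circ\1_\mu \;\equiv\; \sum_{S'\in M(\mu,\l)} \ell\bigl(c_{\iota(S)},\,*_\mu,\,S'\bigr)\, c_{S'} \pmod{\CC_{<\mu}}.
\end{equation*}
Each term $c_{S'} = c^{\mu}_{S',*_\mu}$ on the right lies in $\CC_{\le\mu}$ (and the sum is empty, hence zero, when $M(\mu,\l)=\mt$), so $c_{\iota(S)}\in\CC_{\le\mu}$.

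By axiom (1), $\{c^\nu_{A,B}\}$ is a free basis of $\Hom(\mu,\l)$, and $\CC_{\le\mu}\cap\Hom(\mu,\l)$ is precisely the span of those basis elements with $\nu\le\mu$. Since $c_{\iota(S)}$ is itself a basis element at cell $\l$, the containment $c_{\iota(S)}\in\CC_{\le\mu}$ forces $\l\le\mu$, completing the contrapositive. The only subtle point---really just a matter of choosing the correct instance of axiom (3)---is that one must compose $c_{\iota(S)}$ on the source side with $\1_\mu$ (rather than on the target side with $\1_\l$), so that the resulting congruence is taken modulo cells strictly below $\mu$ rather than below $\l$; the latter choice yields only a trivial self-consistency relation and cannot produce the desired contradiction.
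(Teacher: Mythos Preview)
Your proof is correct and follows essentially the same approach as the paper's: identify a cellular basis element at cell $\l$ (you use $c_{\iota(S)} = c^{\l}_{*_\l,\iota(S)}$, the paper uses $c_S = c^{\l}_{S,*_\l}$), show it lies in $\CC_{\le\mu}$, and conclude $\l\le\mu$ from the basis property. The only differences are cosmetic: you pass to $\iota(S)$ unnecessarily, and you re-derive the containment in $\CC_{\le\mu}$ directly from axiom (3), whereas the paper simply cites that $\CC_{\le\mu}$ is an ideal (already established) and that $\1_\mu\in\CC_{\le\mu}$.
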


\begin{proof} Suppose that $S \in M(\l,\mu)$. Then because $\1_\mu$ is inside the ideal $\CC_{\le \mu}$, so is $c_S$. However, $c_S = c_{S,*}$ for $* \in E(\l,\l)$, and therefore $c_S$ is a cellular basis element in cell $\l$. Thus $\l \le \mu$. \end{proof}

\begin{lemma} \label{endoSOACC} Modulo $\CC_{< \l}$, the endomorphism ring $\End(\l)$ is equal to $R \cdot \1_\l$. \end{lemma}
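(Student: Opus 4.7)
The plan is to read the conclusion directly off of the cellular basis. By axiom~(1), $\End(\l) = \Hom(\l,\l)$ has an $R$-basis consisting of all products $c^\mu_{S,T} = c_S \circ c_T$ as $\mu$ ranges over $\L$ and $(S,T)$ ranges over $M(\mu,\l) \times E(\l,\mu)$. The subspace $\CC_{<\l}(\l,\l)$ is spanned by those basis elements with $\mu < \l$, so modulo $\CC_{<\l}$ the quotient is spanned by the basis elements with $\mu \not< \l$.

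First I would use the preceding lemma to cut down the cells that can possibly appear. Applied to the pair $(\l,\mu)$, it forces $M(\mu,\l)$ to be empty unless $\mu \le \l$, and applied to $(\mu,\l)$ it forces $E(\l,\mu)$ to be empty unless $\mu \le \l$ (equivalently, by the involution $\iota$). Hence any surviving basis element satisfies $\mu \le \l$, and combining with $\mu \not< \l$ forces $\mu = \l$.

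Next I would invoke axiom~(4): the sets $M(\l,\l)$ and $E(\l,\l)$ each consist of the single element $*$, with $c_* = \1_\l$. Therefore the only cellular basis element that survives modulo $\CC_{<\l}$ is
\[
c^\l_{*,*} = c_* \circ c_* = \1_\l \circ \1_\l = \1_\l.
\]
This shows that $\End(\l) \equiv R \cdot \1_\l \pmod{\CC_{<\l}}$, as claimed. There is essentially no obstacle here; the statement is a direct bookkeeping consequence of axioms~(1) and~(4) together with the previous lemma, and the proof amounts to observing that $\l$ is the unique cell $\mu$ with $\l \le \mu \le \l$.
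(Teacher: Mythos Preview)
Your proof is correct and follows the same approach as the paper's: modulo lower terms, only $c^\l_{*,*} = \1_\l$ survives. The paper compresses this into a single sentence, but your argument spells out exactly why, invoking the preceding lemma and axiom~(4) just as intended.
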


\begin{proof} Modulo lower terms, only $c^\l_{*,*}$ survives. \end{proof}

\begin{lemma} \label{equivcondition} Given the data of a potential SOACC $\CC$ such that conditions (1), (2), and (4) hold, the remaining SOACC condition (3) is equivalent to the fact that $\CC_{\le \l}$ is an ideal for all $\l$. \end{lemma}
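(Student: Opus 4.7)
The plan is to prove the two implications separately; the backward direction is where most of the content lies.

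For the forward direction, that (3) implies $\CC_{\le\l}$ is an ideal for all $\l$, I would proceed by well-founded induction on $\l \in \L$ using the descending chain condition. Assuming $\CC_{\le\mu}$ is a two-sided ideal for every $\mu < \l$, I first deduce that $\CC_{<\l}$, being the linear span of cellular basis elements $c^\mu_{S,T}$ with $\mu < \l$ each of which sits in some $\CC_{\le\mu} \subseteq \CC_{<\l}$, is itself a two-sided ideal. To show $\CC_{\le\l}$ is closed under left composition by an arbitrary $f$, apply (3) to compute $f \circ c^\l_{S,T} = (f\circ c_S) \circ c_T$: the main term $\sum_{S'} \ell(f,S,S') c_{S'} \circ c_T = \sum_{S'} \ell(f,S,S') c^\l_{S',T}$ is manifestly in $\CC_{\le\l}$, and the remainder $g \circ c_T$ with $g \in \CC_{<\l}$ lies in $\CC_{<\l} \subseteq \CC_{\le\l}$ by the inductive hypothesis. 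For right composition one uses the involution $\iota$ from (2): applying $\iota$ to (3) yields a symmetric formula $c_T \circ h \equiv \sum_{T'} r(h,T,T')\, c_{T'}$ modulo $\CC_{<\l}$ (since $\iota$ preserves cells), and the argument proceeds symmetrically.

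For the substantive backward direction, assume (1), (2), (4) together with the ideal property of each $\CC_{\le\l}$. Fix $f \in \Hom(Y,Z)$, a cell $\l$, and $S \in M(\l,Y)$. The key observation is that condition (4) forces $E(\l,\l) = \{*\}$ with $c_* = \1_\l$, so $c_S = c_S \circ \1_\l = c^\l_{S,*}$ is itself a cellular basis element in cell $\l$. In particular $c_S \in \CC_{\le\l}$, and by the ideal property, $f \circ c_S \in \CC_{\le\l}$. Using (1), I expand $f \circ c_S \in \Hom(\l,Z)$ uniquely in the cellular basis as $\sum_{\mu, S', T'} a^\mu_{S',T'}\, c^\mu_{S',T'}$ with $T' \in E(\l,\mu)$; the fact that this sum lies in $\CC_{\le\l}$, combined with uniqueness of the basis expansion, forces $\mu \le \l$ for every surviving term. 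Isolating the $\mu = \l$ contribution and invoking (4) once more to force $T' = *$ and $c^\l_{S',*} = c_{S'}$, one obtains modulo $\CC_{<\l}$
\[
f \circ c_S \equiv \sum_{S' \in M(\l,Z)} a^\l_{S',*}\, c_{S'},
\]
and setting $\ell(f,S,S') := a^\l_{S',*}$ produces the required function; uniqueness of the cellular basis expansion ensures it is well-defined.

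The main obstacle is in the forward induction, where the remainder $g \circ c_T$ must be shown to land in $\CC_{<\l}$ via the inductive hypothesis that $\CC_{<\l}$ is already a two-sided ideal---this in turn requires the right-composition formula obtained from (3) via $\iota$, so condition (2) is genuinely used. Once the induction is organized so that $\CC_{<\l}$ is handled before the new cell $\l$, the remaining manipulations are routine. The backward direction is strictly easier once one recognizes that condition (4) promotes $c_S$ to a cellular basis element, thereby translating the hypothesized ideal filtration directly into the form demanded by (3).
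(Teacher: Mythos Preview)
Your proof is correct and follows essentially the same approach as the paper. The paper's own proof is a single sentence for the backward direction---observing that $\{c_{S'}\}_{S'\in M(\l,Z)}$ is a basis for $\Hom(\l,Z)$ modulo $\CC_{<\l}$, which is exactly your use of condition~(4) to identify the cell-$\l$ part of the expansion---and treats the forward direction as already established by the earlier lemma for cellular categories; your inductive argument simply unpacks that lemma.
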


\begin{proof} It is clear that $\{c_{S'}\}_{S' \in M(\l,Z)}$ forms a basis for $\Hom(\l, Z)$ modulo $\CC_{< \l}$. From this, \eqref{cellularcondition} is clear. \end{proof}

Lemma \ref{equivcondition} has no analog for ordinary cellular algebras; the cellular condition \eqref{oldcellularcondition} becomes more natural after the cellular basis is factored.

\begin{defn} \label{pairingSOACC} The \emph{cellular pairing} $\phi^\l_X \co E(X, \l) \times M(\l,X) \to R$ sends $(T,S) \mapsto c_T \circ c_S$, viewed as an element of $\End(\l)$ modulo $\CC_{< \l}$. That is,
the pairing picks out the coefficient of the identity map inside $c_T \circ c_S$. \end{defn}

\begin{lemma} \label{pairmultSOACC} Modulo $\CC_{< \l}$, one has $c^\l_{S,T} \circ c^\l_{U,V} = \phi^\l(T,U) c^\l_{S,V}$. \end{lemma}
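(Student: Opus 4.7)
The plan is to expand both sides using the definition \eqref{cellbasisdef} and then apply Lemma \ref{endoSOACC}. Explicitly, using associativity of composition,
\[
c^\l_{S,T} \circ c^\l_{U,V} \;=\; (c_S \circ c_T) \circ (c_U \circ c_V) \;=\; c_S \circ (c_T \circ c_U) \circ c_V.
\]
The middle factor $c_T \circ c_U$ is an element of $\End(\l)$, since $c_T \in \Hom(Y,\l)$ and $c_U \in \Hom(\l,Y)$ where $Y$ is the common object indexing $T \in E(Y,\l)$ and $U \in M(\l,Y)$.

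Next, I would invoke Lemma \ref{endoSOACC}, which says that modulo $\CC_{< \l}$ the endomorphism ring $\End(\l)$ is just $R \cdot \1_\l$. The coefficient expressing $c_T \circ c_U$ in this basis is, by Definition \ref{pairingSOACC}, exactly $\phi^\l(T,U)$. Therefore
\[
c_T \circ c_U \;\equiv\; \phi^\l(T,U)\cdot \1_\l \pmod{\CC_{<\l}}.
\]
Substituting this congruence into the expression above and using $R$-linearity gives
\[
c^\l_{S,T} \circ c^\l_{U,V} \;\equiv\; \phi^\l(T,U)\cdot (c_S \circ \1_\l \circ c_V) \;=\; \phi^\l(T,U) \cdot c^\l_{S,V} \pmod{\CC_{<\l}}.
\]

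The only point that requires justification is that the congruence modulo $\CC_{<\l}$ is preserved after pre- and post-composition with $c_S$ and $c_V$. This is immediate from the fact, noted after Lemma \ref{equivcondition} (and asserted for general cellular categories in the lemma preceding it), that $\CC_{<\l}$ is a two-sided ideal, so the error term lying in $\CC_{<\l}$ remains in $\CC_{<\l}$ after composing with arbitrary morphisms on either side. There is essentially no obstacle here; the result is a direct consequence of the factorized form of the cellular basis in an SOACC together with Lemma \ref{endoSOACC}, and indeed this clean multiplication rule is one of the main motivations for axiomatizing SOACCs in the first place.
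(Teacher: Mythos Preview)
Your proof is correct and follows exactly the approach of the paper, which simply says ``This is clear, by writing the composition as $c_S(c_T c_U)c_V$.'' You have just spelled out the details that the paper leaves implicit.
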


\begin{proof} This is clear, by writing the composition as $c_S(c_T c_U)c_V$. \end{proof}

Let us compare cellular categories with SOACCs.

In a cellular category, there is no requirement that $c^\l_{S,T}$ factors, and thus no identification of $\L$ with a subset of $\Ob(\CC)$. For example, any cellular algebra can be thought of
as a cellular category with one object, and there are no other objects for the cellular basis to factor through; if this is an SOACC, then the cellular algebra is just the base ring $R$.
However, many examples of cellular algebras can be evidenced as endomorphism rings in SOACCs (see \S\ref{subsec-egs} for examples). Thus it is interesting to ask which cellular
categories can be enlarged to become SOACCs.

\begin{question} Is there a natural algorithm which takes a cellular category $\CC$ and produces an SOACC $\hat{\CC}$? We require that $\Ob(\hat{\CC}) = \Ob(\CC) \coprod \L$, and that the
objects originally in $\CC$ come from a fully faithful functor $\CC \to \hat{\CC}$, which preserves the cellular basis. \end{question}

At the moment there are no known cellular categories which are proven not to be embeddable in an SOACC. It also appears in examples that there may be a great deal of freedom in the
construction of $\hat{\CC}$, because one can allow the sets $M(\l,\mu)$ to be rather large. The cellular structure does not seem to encode sufficiently the lower terms which appear when
cellular basis elements are composed; whether there is a natural choice remains to be seen.

Note that embedding a cellular category in an SOACC may change its decategorification. For example, in a cellular category there may exist $\l \in \L$ such that $\phi^\l_X = 0$ for all $X$.
Such a $\l$ will not contribute a simple module to the representation theory of $\CC$. However, this will never happen in an SOACC because $\phi^\l_\l(*,*)=1$. This was desirable because it
gave meaning to the cellular pairing as a local intersection form, which computes the coefficient of the identity in the composition of two maps. The lack of degeneracy in SOACCs allows one
to compute their Grothendieck groups and trace decategorification with ease. This is one of the great advantages of SOACCs over cellular categories.

Being an SOACC is closed under categorical equivalence. The objects in $\L$ are sent to some other objects, which serve as the cellular poset in the target category. However, the cellular
structure itself is clearly not intrinsic, and an autoequivalence may alter the chosen structure. The additive closure of an SOACC is also an SOACC in a natural way. One sets $E(X \oplus Y,
\l) = E(X,\l) \coprod E(Y,\l)$, and constructs the cellular maps and the cellular basis in a matrix-like fashion. However, the Karoubi envelope of an SOACC is typically not an SOACC in a
natural way. Given an idempotent pair $(X,e)$, it is not clear how to define $E((X,e),\l)$, as the idempotent need not play nicely with the cellular basis. This entire paragraph applies
equally to cellular categories.

Thus neither notion is intrinsic, though SOACCs are more rigid than cellular categories. For a cellular category $\CC$, one can obtain another cellular structure by changing the basis
$c^\l_{S,T}$ in an upper-triangular way, adding lower terms to each basis element (consistently with $\iota$). Similarly one can change the morphisms $c_S$ by adding lower terms to obtain a
new SOACC structure. The induced change of basis on $\{c^\l_{S,T}\}$ is more restrictive than was allowed for a cellular category. In either context, we call such an operation a \emph{cellular change of basis}.

\begin{remark} The reader may be wondering what the adjective ``strictly" means for a strictly object-adapted cellular category. One of the most famous cellular algebras in the literature
is the Hecke algebra itself in type $A$. The first author wished to view the Hecke algebra as an endomorphism ring inside a cellular category where the cellular basis would factor, and a
natural choice is the Hecke algebroid (or Schur algebroid) defined by Williamson \cite{WilSSB}. However, this Hecke algebroid is not naturally an SOACC. Instead, one can identify each $\l
\in \L$ with a set of objects (not a single object) in the Hecke algebroid, with fixed transition maps between these objects. Now, the cellular basis factors as $c^\l_{S,T} = c_S \circ \psi
\circ c_T$, where $\psi$ is the transition map between the target of $c_T$ and the source of $c_S$. This results in the much more technical (though more intrinsic) theory of
\emph{object-adapted cellular categories}, which remains largely undeveloped. \end{remark}

\subsection{Examples}
\label{subsec-egs}

\begin{ex} Consider the \emph{Temperley-Lieb category} $\TLC$ defined, for instance, in \cite{WestburyTL}. One can set $\L = \Ob(\TLC) = \NM$, with the usual order $\le$. Then $M(m,n)$ is
the set of cup diagrams from $m$ to $n$, and $E(n,m)$ the set of cap diagrams. Given a cup (resp. cap) diagram $S$, $c_S$ is the corresponding morphism in $\TLC$. The involution $\iota$
flips a diagram upside-down. This equips $\TLC$ with the structure of an SOACC over $R = \ZM[q,q^{-1}]$. \end{ex}

\begin{ex} Let $R = \Bbbk$ be a field, and let $V = \Bbbk^I$ be a finite dimensional vector space. The endomorphism ring $A = \End(V)$, also known as $I \times I$ matrices, is a prototypical
cellular algebra with a single cell. One has $M = E = I$, the cellular basis is the usual matrix entry basis, and $\phi$ is the usual inner product corresponding to the identity matrix. In
similar fashion, one can equip the category $\textrm{Vect}_{\Bbbk}$ with the structure of an SOACC, with a single cell corresponding to the one-dimensional vector space $\Bbbk$. The SOACC
structure is determined by a choice of basis for every vector space (so long as each $\phi$ is the identity matrix), which is clearly not an intrinsic notion.

The Karoubi envelope of $\textrm{Vect}$ is equivalent to $\textrm{Vect}$, and thus can be equipped with an SOACC structure. However, it does not have any natural SOACC structure, as a choice
of basis for a vector space will not descend to a choice of basis for a summand. \end{ex}

\begin{ex} Let $R=\Bbbk$ be a field, let $I$ index a set of variables $\{x_i\}$. Let $A = \Bbbk[I]/J_2$ denote the quotient of the polynomial ring in variables $x_i$ by the ideal generated
by all elements of degree $2$. This ring can be equipped with the structure of a cellular category (with one object, $\star$) as follows. One has $\L = I \coprod \{\star\}$, with $\star >
i$ for all $i \in I$, and no relations between nonequal elements of $I$. For every $\l \in \L$, the sets $E(\star,\l)$ and $M(\l,\star)$ are singletons, with $c^\star = 1$ and $c^i = x_i$.
The antiinvolution $\iota$ is the identity map. It is easy to see that $A$ is a (graded) cellular algebra. Note that $\phi^\star=1$, while $\phi^i=0$ for all $i \in I$.


Now consider the minimal way to embed $A$ as the endomorphism ring (of an object called $\star$) inside an SOACC $\CC$. Add one new object for each $i \in I$, and factor each polynomial
generator $x_i$ as a composition of two arrows $x_i = c_{\overline{i}} c_i$, such that $c_i c_{\overline{i}} = 0$ inside $\End(i)$. We leave the details to the reader. Now one has $\phi^i_X
= 0$, while $\phi^i_i = 1$ as it must for any SOACC.

There is only one nonzero simple module $L_{\star}$ for $A$, on which each $x_i$ acts as zero. However, $\CC$ also has a nonzero simple module $L_i$ for each $i \in I$, on which the identity
morphism $\1_i$ acts by the identity, and the identity morphism $\1_j$ acts as zero for each $j \ne i$. Thus the Grothendieck group of $\CC$ is larger than that of $A$.

Similarly, the ring $A$ is commutative, so $\Tr(A) = A$, and $x_i \ne 0$ in $\Tr(A)$. Meanwhile, $\Tr(\CC)$ is spanned by identity maps, and $x_i = 0$ in $\Tr(\CC)$. \end{ex}

\begin{ex} The main example of our paper is the Hecke category $\DC(W,S,\hg)$. We summarize the SOACC structure here; for details on the category itself, see \cite{EWGR4SB}. The category
$\DC$ is linear over the polynomial ring $R = \OC(\hg)$. (In fact, morphisms are $R$-bimodules, but we arbitrarily choose the right action for our $R$-linearity.) An object in $\DC$ is a
sequence $\un{w}$ of simple reflections in $S$. The poset $\L$ is isomorphic to $W$ with its Bruhat order. We identify $\L$ with a set of objects in $\DC$ by choosing an arbitrary reduced
expression for each element of $W$.

Any sequence $\un{w}$ of length $d$ has $2^d$ subsequences, which can be encoded in sequences $\eb$ of the exponents $0$ and $1$. We write $\eb \subset \un{w}$, and write $\un{w}^{\eb}$ for
the element expressed by the subsequence. We set $$M(x, \un{w}) = E(\un{w}, x) = \{\eb \subset \un{w} \; | \; \un{w}^\eb = x\}.$$ In \cite[\S 6]{EWGR4SB}, an algorithm is given (following
Libedinsky \cite{LibLL}) which produces, for any $\eb \in E(\un{w},x)$, a morphism $\LL_{\un{w}, \eb}$ in $\DC$ from $\un{w}$ to the chosen reduced expression $\un{x}$. This so-called
\emph{light leaves map} will be $c^x_{\eb}$. The map $\iota$ which flips diagrams in $\DC$ upside-down is an antiinvolution. Flipping $\LL_{\un{w},\eb}$ upside-down yields the corresponding
morphism for $\eb \in M(x,\un{w})$, which is denoted $\overline{\LL}_{\un{w},\eb}$. This fixes the data of an SOACC. We write $\LLL^x_{\eb, \fb} = \overline{\LL}_{\un{w},\eb} \circ
\LL_{\un{y},\fb}$ for the composition of two light leaves for subexpressions which express the same element $x \in W$; these elements of the purported cellular basis are \emph{double leaves
maps}.

The algorithm for constructing a light leaves map is not deterministic and depends on some choices. Making different choices will correspond to a cellular change of basis. When $\un{x}$ is
the chosen reduced expression for $x \in W$, there is a unique subexpression $\eb$ with $\un{x}^\eb = x$, namely the ``all ones" subexpression. The algorithm allows for $\LL_{\un{x}, \eb}$
to be the identity map $\1_{\un{x}}$ in this case, and we enforce that here (though this choice was not required in \cite{EWGR4SB}).

It was proven in \cite{EWGR4SB} that the double leaves forms a basis for morphism spaces in $\DC$ under the (right) action of $R$. Properties (1) and (2) of an SOACC are now obvious.
Property (4) follows by the previous paragraph. Finally, it was shown in \cite{EWGR4SB} that $\DC_{\le x}$ is an ideal which, by Lemma \ref{equivcondition}, implies that property (3) of an
SOACC holds. Thus $\DC$ is an SOACC. \end{ex}

\subsection{Fibered object-adapted cellular categories}
\label{subsec-defn-SOAaCC}

We are interested in computing the Grothendieck and trace decategorifications of SOACCs, which we will do in the next section. However, the same techniques apply to a slightly broader class
of categories. In an SOACC the endomorphism ring of an object $\l \in \L$, modulo lower terms, is spanned by the identity map over the base ring $R$. Now we allow it to be a more interesting
commutative ring $A_\l$, depending on $\l$. When we compute the Grothendieck group, we will assume that $A_\l$ is local.

\begin{defn} A \emph{fibred strictly object-adapted cellular category} or \emph{fibred SOACC} over a commutative base ring $R$ is the following data:
\begin{itemize} \item An $R$-linear category $\CC$.
\item A set of objects $\L \subset \Ob(\CC)$, and a partial order $\le$ on $\L$. We assume that $\le$ has the descending chain condition.
\item For each $X \in \Ob(\CC)$ and each $\l \in \L$, finite sets $M(\l,X)$ and $E(X,\l)$. These two sets are in a
fixed bijection, and we call the map $M(\l,X) \leftrightarrow E(X,\l)$ in either direction by the name $\iota$.
\item A map $c \co M(\l,X) \to \Hom(\l,X)$ and a map $c \co E(X,\l) \to
\Hom(X,\l)$. We write $c_S$ instead of $c(S)$ (resp. $c_T$ instead of $c(T)$), for $S \in M(\l,X)$ (resp. $T \in E(X,\l)$).
\item A commutative free $R$-algebra $A_\l \subset \End(\l)$ for each $\l \in \L$, equipped with an $R$-linear antiinvolution $\iota$.
\end{itemize}
For $T \in E(X,\l)$ and $S \in M(\l,Y)$ and $a \in A_\l$ we let
\begin{equation} \label{cellbasisdef2} c^\l_{S,a,T} \define c_S \circ a \circ c_T. \end{equation}
We let $\CC_{\le \l}$, $\CC_{< \l}$ be defined as before.

This data satisfies the following conditions: \begin{enumerate}
\item For fixed $X,Y \in \Ob(\CC)$, the set $\{c^\l_{S,a_i,T}\}$ forms an $R$-basis for $\Hom(X,Y)$, where $a_i$ ranges over an $R$-basis for $A_\l$.
\item There is an $R$-linear
contravariant involution of $\CC$, also denoted $\iota$, which extends the antiinvolution on each $A_\l$, and for which $\iota(c_S) = c_{\iota(S)}$. This property defines $\iota$ uniquely, because it implies that $\iota \co \Hom(X,Y) \to
\Hom(Y,X)$ sends $c^\l_{S,a,T} \mapsto c^\l_{\iota(T),\iota(a), \iota(S)}$.
\item There is a function $\ell \co \Hom(Y,Z) \times M(\l,Y) \times M(\l,Z) \to A_\l$ such that, for every $f \in \Hom(Y,Z)$ and
$S \in M(\l,Y)$, one has \begin{equation} \label{cellularcondition2} f \circ c_S = \sum_{S' \in M(\l,Z)}  c_S' \ell(f,S,S') \textrm{ modulo } \CC_{< \l}. \end{equation}
\item The sets $M(\l,\l) \cong E(\l,\l)$ consist of a single point $*$, and both maps $c_*$ are equal to the identity map $\1_\l \in \End(\l)$.
\end{enumerate} \end{defn}

Thus any morphism can be written as a sum of morphisms $c^\l_{S,a,T}$; no $R$-coefficients are needed, because they can be absorbed into the element $a$.

One must adjust the results dealing with $\End(\l)$ from the SOACC case.

\begin{lemma} (c.f. Lemma \ref{endoSOACC}) Modulo $\CC_{< \l}$, the endomorphism ring $\End(\l)$ is equal to $A_\l$. \end{lemma}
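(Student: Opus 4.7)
The plan is to mimic the proof of Lemma \ref{endoSOACC} in the SOACC setting, using the cellular basis from condition (1) together with the triviality of the sets $M(\mu,\l)$ and $E(\l,\mu)$ for $\mu \not\le \l$.

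First I would observe that the analog of the earlier lemma (that $M(\l,\mu) = \mt$ unless $\l \le \mu$) still holds in the fibred setting, by exactly the same argument: if $S \in M(\mu,\l)$ then $c_S = c_S \circ \1_\l \circ c_*$ would lie in $\CC_{\le \mu}$ while simultaneously being (up to unit) a cellular basis element in cell $\l$, forcing $\mu \le \l$. Dually $E(\l,\mu) = \mt$ unless $\mu \le \l$.

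Next, by condition (1) of a fibred SOACC, any element of $\End(\l)$ is an $R$-linear combination of basis elements $c^\mu_{S,a_i,T}$ with $\mu \in \L$, $S \in M(\mu,\l)$, $T \in E(\l,\mu)$, and $a_i$ ranging over a fixed $R$-basis of $A_\mu$. By the previous paragraph, the only $\mu$ for which the index sets are non-empty satisfy $\mu \le \l$, and any basis element with $\mu < \l$ belongs to $\CC_{<\l}$ by definition. Thus modulo $\CC_{<\l}$, $\End(\l)$ is spanned by those basis elements with $\mu = \l$.

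Finally, condition (4) tells us that $M(\l,\l) = E(\l,\l) = \{*\}$ and $c_* = \1_\l$, so the surviving basis elements are precisely $c^\l_{*,a_i,*} = \1_\l \circ a_i \circ \1_\l = a_i$. These form an $R$-basis of $A_\l$, giving $\End(\l) \equiv A_\l$ modulo $\CC_{<\l}$. There is no real obstacle here — the statement is essentially a direct bookkeeping consequence of the definition, with the only subtlety being the emptiness of $M(\mu,\l)$ for $\mu \not\le \l$, which transfers verbatim from the SOACC case.
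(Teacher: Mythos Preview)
Your proof is correct and follows the same approach as the paper, which gives only the one-line justification ``Modulo lower terms, only $c^\l_{*,a,*}$ survives, for various $a \in A_\l$.'' Your first paragraph has a small notational slip (for $S \in M(\mu,\l)$ one has $c_S \in \Hom(\mu,\l)$, so the factorization should read $c_S = c_S \circ \1_\mu = c^\mu_{S,1,*}$, placing $c_S$ in cell $\mu$ while $\1_\l \circ c_S$ puts it in $\CC_{\le \l}$), but the conclusion $\mu \le \l$ and the rest of the argument are exactly right.
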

	
\begin{proof} Modulo lower terms, only $c^\l_{*,a,*}$ survives, for various $a \in A_\l$. \end{proof}

\begin{defn} (c.f. Definition \ref{pairingSOACC}) The \emph{cellular pairing} $\phi^\l_X \co E(X, \l) \times M(\l,X) \to A_\l$ sends $(T,S) \mapsto c_T \circ c_S$, viewed as an element of
$\End(\l)$ modulo $\CC_{< \l}$. That is, the pairing picks out the coefficient of the identity map inside $c_T \circ c_S$. \end{defn}

\begin{lemma} \label{pairmultSOAaCC} (c.f. Lemma \ref{pairmultSOACC}) Modulo $\CC_{< \l}$, one has $c^\l_{S,a,T} \circ c^\l_{U,b,V} = c^\l_{S, a \phi^\l(T,U) b,V}$. \end{lemma}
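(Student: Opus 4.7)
The plan is to imitate the proof of Lemma \ref{pairmultSOACC} verbatim, with the scalars in $R$ replaced by the commuting ``scalars'' in $A_\l$. Expanding with \eqref{cellbasisdef2}, the composition becomes
$$c^\l_{S,a,T} \circ c^\l_{U,b,V} \;=\; c_S \circ a \circ (c_T \circ c_U) \circ b \circ c_V,$$
where $c_T \circ c_U \in \End(\l)$. By the fibred analogue of Definition \ref{pairingSOACC}, the endomorphism $c_T \circ c_U$ is congruent modulo $\CC_{<\l}$ to the element $\phi^\l(T,U) \in A_\l$. If we may legitimately substitute this congruence into the displayed equation, the remaining $a, \phi^\l(T,U), b$ all live in the commutative $R$-algebra $A_\l$, so they combine into the single element $a\,\phi^\l(T,U)\,b \in A_\l$, yielding $c_S \circ (a\,\phi^\l(T,U)\,b) \circ c_V = c^\l_{S,\, a\phi^\l(T,U)b,\, V}$ by \eqref{cellbasisdef2}, as desired.

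The one point that genuinely needs argument is therefore that $\CC_{<\l}$ is a two-sided ideal of $\CC$, so that replacing $c_T \circ c_U$ by $\phi^\l(T,U)$ modulo $\CC_{<\l}$ only perturbs the outer composition $c_S \circ a \circ (-) \circ b \circ c_V$ by an element of $\CC_{<\l}$. This is the exact fibred counterpart of the lemma stated just after the SOACC definition: condition (3) in the fibred SOACC definition produces, for any $f \in \Hom(Y,Z)$ and any $c_U$ with $U \in M(\mu,Y)$, an expansion $f \circ c_U \equiv \sum_{U'} c_{U'}\,\ell(f,U,U') \pmod{\CC_{<\mu}}$, with the right-hand side in $\CC_{\le \mu}$. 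A straightforward descending induction on $\mu$ (using the descending chain condition on $\L$) then shows that $\CC_{\le \mu}$, and hence $\CC_{<\l}$, is stable under left composition by arbitrary morphisms. Stability under right composition follows by applying the antiinvolution $\iota$, which preserves the filtration because $\iota(c^\mu_{S,a,T}) = c^\mu_{\iota(T),\iota(a),\iota(S)}$ again lies in cell $\mu$.

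The expected obstacle is purely bookkeeping: one must check that the coefficients $\ell(f,S,S') \in A_\l$ in \eqref{cellularcondition2} are placed on the correct side (they act between $c_{S'}$ on the left and any trailing morphism on the right, which is consistent with $A_\l \subset \End(\l)$), and that the commutativity of $A_\l$ is used only after the substitution has been made. Once the ideal property is in hand, the computation is a one-line associativity manipulation together with the commutativity of $A_\l$.
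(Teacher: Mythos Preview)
Your proof is correct and follows the same line as the paper's, which simply says ``This is clear.'' You have spelled out the one nontrivial point the paper leaves implicit---that $\CC_{<\l}$ remains a two-sided ideal in the fibred setting---and your descending induction using condition (3) together with the antiinvolution $\iota$ is exactly the right way to justify this; note that commutativity of $A_\l$ is not actually needed for the final step, since $a$, $\phi^\l(T,U)$, and $b$ are simply being composed in $\End(\l)$.
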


\begin{proof} This is clear. \end{proof}

Any SOACC is clearly a fibred SOACC where each fibre ring $A_\l$ is just $R$. A fibred SOACC is typically not a cellular category, unless the rings $A_\l$ are themselves cellular algebras (and even then,
being a cellular category is an additional structure). Nonetheless, when $A_\l$ behaves nicely, they share many of the same properties as cellular categories. This will be seen in the next
section.

The following example and the computation of its trace decategorification in \cite{BHLZ} motivates the study of fibred SOACCs.
\begin{example}
In \cite{Lau1} a 2-category $\mathcal{U}=\mathcal{U}(\mf{sl}_2)$ was defined whose Karoubi envelope $\UcatD$ has Grothendieck group isomorphic to the integral idempotented form $\UA$ of the quantum enveloping algebra of $\mf{sl}_2$. The indecomposable 1-morphisms of this 2-category correspond bijectively to elements of the Lusztig canonical basis  $\mathbb{B}$ of $\UA$
\begin{enumerate}[(i)]
     \item $E^{(a)}F^{(b)}1_{n} \quad $ for $a,b\geq 0$,
     $n\in\Z$, $n\leq b-a$,
     \item $F^{(b)}E^{(a)}1_{n} \quad$ for $a$,$b\geq 0$, $n\in\Z$,
     $n\geq
     b-a$,
\end{enumerate}
where $E^{(a)}F^{(b)}1_{b-a}=F^{(b)}E^{(a)}1_{b-a}$. Let
 $_m\mathbb{B}_n$ be set of elements in $\mathbb{B}$ belonging to $1_m(\UA)1_n$.

 The defining relations for the algebra $\UA$
all lift to explicit isomorphisms in $\dot{\Ucat}$ (see \cite[Theorem 5.1, Theorem 5.9]{KLMS}) after associating to
 each $x \in \Bbb B$  a $1$-morphism in $\UcatD$ as follows:
\begin{equation} \label{eq_basis}
  x \mapsto \mathcal{E}(x) := \left\{
\begin{array}{cl}
  \mathcal{E}^{(a)}\mathcal{F}^{(b)}\onen & \text{if $x=E^{(a)}F^{(b)}1_n$,} \nn \\
  \mathcal{F}^{(b)}\mathcal{E}^{(a)}\onen & \text{if $x=F^{(b)}E^{(a)}1_n$.} \nn
\end{array}
  \right.
\end{equation}

In \cite[Proposition 5.15]{KLMS} a basis is provided for the hom spaces between these indecomposable 1-morphisms.   For $n\in\Z$, $a,b \geq 0$, $\delta\in\Z$, let us define the following
sets:
\begin{gather*}
 B^+(n,a,b,\delta) := \\
 \{f^{b,a,i,j}_{\lambda,\mu,\nu,\sigma,\tau}\onen |
0\leq i,j\leq \min(a,b), \delta=i-j, \lambda \in P(a-j), \mu \in P(b-j), \nu \in P(i), \sigma \in P(j), \tau \in P \},
\end{gather*}
\begin{gather*}
B^-(n,a,b,\delta) := \\
\{g^{a,b,i,j}_{\lambda,\mu,\nu,\sigma,\tau}\onen |
0\leq i,j\leq \min(a,b), \delta=i-j, \lambda \in P(a-j), \mu \in P(b-j), \nu \in P(i), \sigma \in P(j), \tau \in P\},
\end{gather*}
where the $P(a)$ denotes the set of all partitions with at most $a$
parts, $P$ denote the collection of all partitions of arbitrary size, and
\begin{equation*}
 f^{b,a,i,j}_{\lambda,\mu,\nu,\sigma,\tau}\onen := \xy
 (-8,14);(-8,-14); **[grey][|(4)]\dir{-} ?(.85)*[grey][|(3)]\dir{<}?(.5)*{\bigb{\mu}};
 (8,-14);(8,14); **[grey][|(4)]\dir{-} ?(.85)*[grey][|(3)]\dir{<}?(.5)*{\bigb{\lambda}};
 (8,10)*{};(-8,10)*{} **[grey][|(4)]\crv{(8,2) & (-8,2)} ?(.25)*[grey][|(3)]\dir{<}?(.5)*{\bigb{\nu}};
 (-8,-10)*{};(8,-10)*{} **[grey][|(4)]\crv{(-8,-2) & (8,-2)} ?(.25)*[grey][|(3)]\dir{<}?(.5)*{\bigb{\sigma}};
 (-13,14)*{\scriptstyle b+i-j};
 (13,14)*{\scriptstyle a+i-j};
 (-10,-14)*{\scriptstyle b};
 (10,-14)*{\scriptstyle a};
 (5,-7)*{\scriptstyle j};
 (-5,7)*{\scriptstyle i};
 (18,2)*{b^+(s_{\tau})};
 (16,-8)*{n};
 (24,0)*{};
\endxy,\quad\quad g^{a,b,i,j}_{\lambda,\mu,\nu,\sigma,\tau}\onen  := \xy
 (-8,14);(-8,-14); **[grey][|(4)]\dir{-} ?(.9)*[grey][|(3)]\dir{>}?(.5)*{\bigb{\lambda}};
 (8,-14);(8,14); **[grey][|(4)]\dir{-} ?(.9)*[grey][|(3)]\dir{>}?(.5)*{\bigb{\mu}};
 (8,10)*{};(-8,10)*{} **[grey][|(4)]\crv{(8,2) & (-8,2)} ?(.3)*[grey][|(3)]\dir{>}?(.5)*{\bigb{\nu}};
 (-8,-10)*{};(8,-10)*{} **[grey][|(4)]\crv{(-8,-2) & (8,-2)} ?(.3)*[grey][|(3)]\dir{>}?(.5)*{\bigb{\sigma}};
 (-13,14)*{\scriptstyle a+i-j};
 (13,14)*{\scriptstyle b+i-j};
 (-10,-14)*{\scriptstyle a};
 (10,-14)*{\scriptstyle b};
 (5,-7)*{\scriptstyle j};
 (-5,7)*{\scriptstyle i};
 (18,2)*{b^+(s_{\tau})};
 (16,-8)*{n};
 (24,0)*{};
\endxy
\end{equation*}
where  $b^+\maps {\rm Sym}\longrightarrow \End(\onen)$  is an isomorphism from the ring of symmetric functions to $\End(\onen)$ defined on complete and elementary symmetric functions as
\begin{equation}\label{defbh}
   b^+(h_i)= \xy
(0,0)*{\rbub};
 (5,4)*{n};
 (6,0)*{};
 (5,-2)*{\scriptstyle i};
\endxy, \quad
b^+(\varepsilon_i)=(-1)^i\xy
(0,0)*{\lbub};
 (5,4)*{n};
 (6,0)*{};
 (5,-2)*{\scriptstyle i};
\endxy.
\end{equation}
For more details see \cite[Section 6.2]{BHLZ}.

 This basis can be used to show that the Hom categories $\Us(n,m)$ between objects $n,m \in \Ob(\Us)$ are graded fibred strongly object adapted  cellular categories over the graded ring $R={\rm Sym}$ of symmetric functions in infinitely many variables.   Given that every 1-morphism in $\Us$ is isomorphic to a direct sum of indecomposables, it suffices to establish the fibred SOACC axioms for maps between indecomposables.

The objects $n \in \Ob(\Us)$ are parameterized by the weight lattice of $\mathfrak{sl}_2$ which we identify with $\Z$.  For convenience, assume $n,m \geq 0$, then the SOACC structure on $\Us(n,m)$ is given by
\begin{itemize}
  \item $\Lambda = {}_m\mathbb{B}_{n}$,
  \item The order is that $(b,a) < (b+j,a+j)$ for $j \geq 0$, i.e. the order is given by the thickness of the strands.
  \item One has $A_{(b,a)} \cong {\rm Sym}_b \boxtimes {\rm Sym}_a \boxtimes R$, with a basis over $R$ given by Schur polynomials $\l \in P(a)$ and $\mu \in P(b)$.
  \item The set $E((b,a), (b-j,a-j))$ is parametrized by $\sigma \in P(j)$.
  \item If $\lambda = \mathcal{F}^{(b)} \mathcal{E}^{(a)}1_n$ and $X=\mathcal{F}^{(b-j)} \mathcal{E}^{(a-j)}1_n$ then
 \[
E(\Lambda,X) :=
\vcenter{\xy
 (-8,6);(-8,-14); **[grey][|(4)]\dir{-} ?(.85)*[grey][|(3)]\dir{<};
 (8,-14);(8,6); **[grey][|(4)]\dir{-} ?(.85)*[grey][|(3)]\dir{<};
 (-8,-10)*{};(8,-10)*{} **[grey][|(4)]\crv{(-8,-2) & (8,-2)} ?(.25)*[grey][|(3)]\dir{<}?(.5)*{\bigb{\sigma}};
 (-13,6)*{\scriptstyle b-j};
 (13,6)*{\scriptstyle a-j};
 (-10,-14)*{\scriptstyle b};
 (10,-14)*{\scriptstyle a};
 (5,-7)*{\scriptstyle j};
 (16,-1)*{n};
 (24,0)*{};
\endxy}
 \qquad \qquad
M(X,\Lambda) :=
\vcenter{\xy
 (-8,14);(-8,-6); **[grey][|(4)]\dir{-} ?(.85)*[grey][|(3)]\dir{<};
 (8,-6);(8,14); **[grey][|(4)]\dir{-} ?(.85)*[grey][|(3)]\dir{<};
 (8,10)*{};(-8,10)*{} **[grey][|(4)]\crv{(8,2) & (-8,2)} ?(.25)*[grey][|(3)]\dir{<}?(.5)*{\bigb{\nu}};
 (-10,14)*{\scriptstyle b};
 (10,14)*{\scriptstyle a};
 (-13,-6)*{\scriptstyle b-j};
 (13,-6)*{\scriptstyle a-j};
 (-5,7)*{\scriptstyle j};
 (16,4)*{n};
 (24,0)*{};
\endxy}
\]
\end{itemize}

Using thick calculus relations, arguing as in the proof of \cite[Lemma 5.2]{KLMS} using \cite[Lemma 4.16]{KLMS} one can establish axiom (3) for a fibred SOACC.
\end{example}

\subsection{Decategorification}
\label{subsec-decat}

First we treat the Grothendieck decategorification, and then the trace decategorification. Let $\dot{\CC}$ denote the Karoubi envelope of $\CC$.

In \cite{EWGR4SB} it was proven that the Karoubi envelope of $\DC$ has one indecomposable object for each $\l \in \L$. The proof used is actually quite general, and applies almost verbatim to a general graded SOACC, and in fact to graded fibred SOACCs as well. We repeat the proof here.

\begin{defn} Let $\Bbbk$ be a commutative complete local ring. A graded $\Bbbk$-algebra $A$ is \emph{nice} if it is non-negatively graded, $A^0 \cong \Bbbk$, and it is finite-dimensional in each degree. A graded fibred SOACC is
\emph{nice} if $R$ and each $A_\l$ are nice graded $\Bbbk$-algebras.
\end{defn}

%

Let $\CC$ be a nice graded $\Bbbk$-linear fibred SOACC and let $\CC^{\ge \l}$ denote the quotient of $\CC$ by the ideal $\CC_{ \ngtr \l}$.  In $\CC^{\ge \l}$ the object $\lambda$ is a minimal cell.  For each object $X$ we can form the homogenous ideal
\[
 J_{\lambda}(X) := \left\{
 c_{S,a,T}^{\l} \mid \deg(a) > 0 , S\in M(\lambda,X), T\in E(X,\lambda)
 \right\}.
\]

\begin{lemma} \label{lem_Jideal}
For each object $X$ in $\CC^{\ge \l}$ and $k\leq 0$ the homogeneous component $(J_{\lambda}(X))^k$ of ideal $J_{\lambda}(X)$ is contained in the graded Jacobson radical of the ring $\End_{\CC^{\ge \l}}(X)$.
\end{lemma}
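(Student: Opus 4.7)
The plan is to show, for every homogeneous $f \in (J_\l(X))^k$ with $k \le 0$ and every homogeneous $g \in \End_{\CC^{\ge \l}}(X)$ of degree $-k$, that the product $gf$ is nilpotent; then $1 - gf$ is inverted by a finite geometric sum, and the standard characterization of the graded Jacobson radical (a homogeneous $f$ belongs to $J^{\mathrm{gr}}$ iff $1-gf$ is a unit for every homogeneous $g$ of matching degree) places $f$ inside it. The required nilpotence rests on two ingredients: that $J_\l(X)$ is actually a homogeneous two-sided ideal (so that $gf \in (J_\l(X))^0$), and a degree estimate showing that iterated compositions in $J_\l(X)$ accumulate positive degree in the middle $A_\l$-factor.

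I would verify the ideal property first. Using axiom (3) of a fibred SOACC and the vanishing of $\CC_{<\l}$ in $\CC^{\ge \l}$ (since $\l$ is a minimal cell there), for any endomorphism $g$ of $X$ one obtains the exact identity $g \circ c_S = \sum_{S' \in M(\l,X)} c_{S'} \cdot \ell(g,S,S')$ with $\ell(g,S,S') \in A_\l$. Because $A_\l$ is non-negatively graded, $\deg(\ell \cdot a) \ge \deg a > 0$ whenever $\deg a > 0$, so $g \circ c^\l_{S,a,T}$ remains in $J_\l(X)$; the symmetric argument handles right multiplication. Next I invoke Lemma~\ref{pairmultSOAaCC}: in $\CC^{\ge \l}$ it gives the exact identity $c^\l_{S,a,T} \circ c^\l_{U,b,V} = c^\l_{S,\, a\,\phi^\l(T,U)\,b,\, V}$, and iterating shows that the middle factor of any $n$-fold composition of $J_\l$-elements lies in $A_\l^{\ge n}$. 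Applied to $(gf)^n = \sum c^\l_{S, a_n, T}$ with $a_n \in A_\l^{\ge n}$, the degree-zero condition $\deg c_S + \deg a_n + \deg c_T = 0$ forces $\deg c_S + \deg c_T \le -n$; since the finite sets $M(\l,X)$ and $E(X,\l)$ admit only finitely many degrees of $c_S$ and $c_T$, such terms must vanish once $n$ exceeds a uniform constant $D$. Hence $(gf)^n = 0$, and $1 - gf$ is invertible with inverse $\sum_{j<n}(gf)^j$.

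The main obstacle is the bookkeeping in the second step. First, one must ensure that compositions of $J_\l$-elements produce no stray higher-cell contributions — this holds because any such composition factors through the object $\l$ and therefore lies in the ideal $\CC_{\le \l}$, which is why ``modulo $\CC_{<\l}$'' in Lemma~\ref{pairmultSOAaCC} upgrades to an equality in $\CC^{\ge \l}$. Second, one needs $\phi^\l(T,U) \in A_\l^{\ge 0}$ together with a uniform lower bound on $\deg c_S$ and $\deg c_T$, both guaranteed by niceness of $A_\l$ and finiteness of $M(\l,X), E(X,\l)$. Once these are in place, the nilpotence of every admissible $gf$ shows $(J_\l(X))^k \subseteq J^{\mathrm{gr}}(\End_{\CC^{\ge \l}}(X))$.
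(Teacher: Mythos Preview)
Your proof is correct and follows essentially the same approach as the paper: both arguments work in $\CC^{\ge \l}$, use the exact multiplication formula $c^\l_{S,a,T}\circ c^\l_{U,b,V}=c^\l_{S,a\phi(T,U)b,V}$, and exploit non-negativity of $A_\l$ together with finiteness of $M(\l,X),E(X,\l)$ to force eventual vanishing of powers. The only difference is bookkeeping: you track the middle factor $a$ climbing into $A_\l^{\ge n}$, whereas the paper tracks $\deg c_S+\deg c_T$ strictly decreasing; these are dual ways of reading the same degree constraint. Your version is slightly more explicit in two places the paper leaves implicit --- you actually verify that $J_\l(X)$ is a two-sided ideal via axiom~(3), and you spell out the reduction from ``$f\in J^{\mathrm{gr}}$'' to ``$gf$ nilpotent for all homogeneous $g$ of complementary degree,'' which is needed since nilpotence of $f$ alone does not place it in the Jacobson radical of a noncommutative ring.
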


\begin{proof}
It suffices to show that any element $y \in (J_{\l}(X))^k$ for $k \leq0$ is nilpotent. Such an element $y$ is a finite sum of $c_{S,a,T}^{\l}$ with
$\deg(a)>0$, so that $\deg(S) + \deg(T) < 0$.
 We know that
\[
c_{S,a,T} c_{U,a',V} = c_{S,a'',V}
 \]
where $a'' = a c_T c_U a'$. So the set of all $S \in M(\l,X)$ (resp. $T\in E(X,\l)$) with nonzero coefficients in $y$ does not grow when taking powers of $y$.

In order for $c_T c_U \ne 0$ one must have $deg(T) + deg(U) \ge 0$. Thus if $c_{S,a,T} c_{U,a',V}$ is nonzero, then
$\deg(S) < \deg(U)$ since $\deg(T) + \deg (S) < 0$ and $\deg(T) + \deg(U) \ge 0$.  Similarly,
$\deg(V) < \deg(T)$.  Thus,
$\deg(S) + \deg(V) < \deg(S) + \deg(T)$  and $\deg(S) + deg(V) < \deg(U) + \deg(V)$.
In particular, for any surviving term in $y^2$ of the form $c_{S,a,V}$, the sum $\deg(S) + \deg(V)$ is
strictly smaller than the corresponding sum for some term in $y$.
But the maximum of $\deg(S)$ plus the maximum of $\deg(T)$ is bounded above, and the minimum of $\deg(S)$ plus the minimum of $\deg(T)$ is bounded below. These same bounds apply to all powers of $y$. Thus all $c_{S,a,V}$ must vanish for some power of $y$.
\end{proof}

\begin{prop} \label{prop:grothofcellcat} Let $\Bbbk$ be a commutative complete local ring, and let $\CC$ be a nice graded $\Bbbk$-linear fibred SOACC. Then $\dot{\CC}$ has one indecomposable
isomorphism class for each $\l \in \L$ and each shift $k \in \ZM$, and its Grothendieck group is isomorphic to $\ZM[q,q^{-1}] \cdot \L$. \end{prop}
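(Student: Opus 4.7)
The plan is to show $\dot{\CC}$ is Krull--Schmidt with indecomposable classes indexed by $\L\times\ZM$, the $\ZM$-factor recording grading shifts; the Grothendieck-group statement follows immediately. The central tool is Lemma~\ref{lem_Jideal}, which I will apply inside the quotients $\CC^{\ge\l}$.

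For any $X\in\CC$, the cellular basis provides a filtration of $\End^0_\CC(X)$ whose associated graded at cell $\l$ is $M(\l,X)\otimes_\Bbbk A_\l\otimes_\Bbbk E(X,\l)$, via the factorization $c_S\circ a\circ c_T$. Lemma~\ref{lem_Jideal} puts the part with $\deg(a)>0$ inside the graded Jacobson radical; iterating this argument across the cellular poset, using the descending chain condition on $\L$, also places off-diagonal and lower-cell contributions into the radical. What remains at each cell $\l$ is the $A_\l^0=\Bbbk$-piece indexed by $M^0(\l,X)\times E^0(X,\l)$, whose composition law---via Lemma~\ref{pairmultSOAaCC} and the pairing $\phi^\l_X$---is that of a matrix algebra over $\Bbbk$. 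Hence $\End^0_\CC(X)/\mathrm{rad}$ is a product of matrix algebras over $\Bbbk$, and since $\Bbbk$ is complete local, idempotents lift; thus $\dot{\CC}$ is Krull--Schmidt.

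Specializing to $X=\l$, the cell-$\l$ block is $1\times 1$ (since $M(\l,\l)=E(\l,\l)=\{*\}$ and $A_\l^0=\Bbbk$), yielding a unique primitive idempotent which lifts to $e_\l\in\End^0_\CC(\l)$; define $B_\l:=e_\l\cdot\l\in\dot{\CC}$. Then $B_\l$ is indecomposable with top cell $\l$. Distinct $\l$'s give non-isomorphic $B_\l$'s because the top cell is a categorical invariant, and distinct grading shifts are distinguished since $\End^k_{\dot{\CC}}(B_\l)$ lies in the graded Jacobson radical for $k\ne 0$ (using the non-negativity of $A_\l$ and the argument of Lemma~\ref{lem_Jideal}), so an isomorphism $B_\l\cong B_\l[k]$ would produce a contradictory unit. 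Any indecomposable summand of any $X$ arises as $B_\l[k]$ for some $(\l,k)$ via the Krull--Schmidt decomposition, so $K_0(\dot{\CC})\cong\ZM[q,q^{-1}]\cdot\L$ with $q$ recording shifts. The main obstacle is the iterated-radical identification: Lemma~\ref{lem_Jideal} alone handles only the top-cell positive-$a$ piece, so absorbing contributions from lower cells requires running the same argument in each $\CC^{\ge\mu}$ and patching via DCC.
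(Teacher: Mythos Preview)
Your approach diverges from the paper's and contains a genuine gap at the crucial step. You establish Krull--Schmidt by arguing that $\End^0(X)/\mathrm{rad}$ is semisimple, but then assert that ``any indecomposable summand of any $X$ arises as $B_\l[k]$ for some $(\l,k)$ via the Krull--Schmidt decomposition.'' Krull--Schmidt only tells you that $X$ decomposes uniquely into indecomposables; it does not identify them. Knowing that $\End^0(X)/\mathrm{rad}$ is a product of matrix blocks tells you how many non-isomorphic summands $X$ has, but not that each such summand is isomorphic to one of the specific objects $B_\l[k]$ you constructed from $X=\l$. That identification is precisely the content of the proposition, and you have not supplied it.

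The paper's argument fills this gap directly, without computing the full semisimple quotient. Given an arbitrary indecomposable $B=(X,e)$, it chooses $\l$ maximal such that $e$ is nonzero in $\CC^{\ge\l}$, and then uses Lemma~\ref{lem_Jideal} to deduce that there exist $U\in M^k(\l,X)$ and $V\in E^{-k}(X,\l)$ with $c_V\circ e\circ c_U$ invertible in $A_\l^0\cong\Bbbk$ (otherwise $e$ would land in the Jacobson radical of $\End_{\CC^{\ge\l}}^0(X)$). One then builds explicit morphisms $i=e\,c_U\,e_1$ and $p=e_1\,c_V\,e$ between $B$ and $B_\l$, and checks that $pi$ is invertible modulo $\CC_{<\l}$, hence invertible in the local ring $\End(B_\l)$; then $ip$ is a nonzero idempotent (up to unit) in the local ring $\End(B)$, hence invertible. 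This yields $B\cong B_\l(-k)$. Your framework can be completed along exactly these lines, but the explicit maps $i,p$ are the missing idea.

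Two smaller points: your indexing ``$M^0(\l,X)\times E^0(X,\l)$'' is too restrictive---degree-zero basis elements $c_{S,1,T}$ occur whenever $\deg S=-\deg T=k$ for any $k$, and this is why shifts enter; and the block at $(\l,k)$ is not literally a matrix algebra unless $\phi^\l_X$ restricted to $E^{-k}\times M^k$ is nondegenerate---one must further quotient by the radical arising from the kernel of $\phi$.
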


\begin{proof}
Let $\l \in \L$. Write the identity of $\l$ as a sum of mutually orthogonal indecomposable idempotents:
\[
\1_\l = e_1 + \dots + e_n.
\]
Then the $e_i$ remain orthogonal idempotents in the quotient of $\End^0(\l)$ by $\CC_{< \l}$, which is $A^0_\l \cong \Bbbk$.  However, $\Bbbk$ has only one non-zero idempotent, the identity. Therefore, there is a unique idempotent (say $e_1$) which is non-zero in this quotient. We define $B_\l$ to be the image of this idempotent. It remains to show that any indecomposable object in $\dot{\CC}$ is isomorphic to some shift of some $B_\l$.


Let $B$ be an arbitrary indecomposable object in $\dot{\CC}$. That is, $B$ can be viewed as an object $X \in \Ob(\CC)$ paired with an indecomposable idempotent $e \in \End^0(X)$. For any $\l
\in \L$ the ring $\End_{\CC^{\ge \l}}(X)$ is a quotient of $\End(X)$. Let $\l$ be maximal amongst the set of elements for which $e \in \End_{\CC^{\ge \l}}(X)$ is nonzero. Equivalently, if we write $e$ in the cellular basis
\[
e = \sum c^\mu_{S,a,T}
\]
then $\l$ is maximal such that some term with $a \ne 0$ appears. Hence in $\End_{\CC^{\ge \l}}(X)$ we can write
\[
e = \sum c^\l_{S,a_{S,T},T}
\]
where the sum is over $S \in M(\l,X)$ and $T \in E(X,\l)$, and $a_{S,T}$ is some homogeneous element of $A_\l$. Note that the degree of $S$, the degree of $T$, and the degree of $a_{S,T}$ sum to zero.

For any $U \in M(\l,X)$ and $V \in E(X,\l)$, one can consider $c_V \circ e \circ c_U \in \End_{\CC^{\ge \l}}(\l) = A_\l$. Suppose that this element of $A_\l$ is in the maximal graded ideal
for each such $U$ and $V$ (as it must be whenever the degree of $U$ plus the degree of $V$ is nonzero). Then, by expanding $e^3 = e$ we conclude that $a_{S,T} \in A_\l$ is in the maximal
graded ideal for all $S,T$. However, by Lemma ~\ref{lem_Jideal} this would imply that $e$ is in the graded Jacobson radical of the ring $\End_{\CC^{\ge \l}}(X)$. This is a contradiction, as no non-zero idempotent lives in the Jacobson radical.

So there exists some $U \in M^k(\l,X)$ and $V \in E^{-k}(X,\l)$ with $c_V \circ e \circ c_U$ invertible in $\End^0_{\CC^{\ge \l}}(\l) \cong \Bbbk$. Letting $e_1$ denote the projection from
$\l$ to $B_\l$ as above, we see that $e_1 c_V e c_U e_1$ is still invertible in $\End^0_{\CC^{\ge \l}}(\l)$. The degree $-k$ morphism $p=e_1 c_V e$ from $X$ to $\l$ (resp. the degree $+k$
morphism $i=e c_U e_1$ from $\l$ to $X$) in $\CC$ induces a morphism $B \to B_\l$ (resp. $B_\l \to B$) in $\dot{\CC}$. The composition $pi$ is invertible in the quotient of $\End(B_\l)$ by
$\CC_{< \l}$, so it can not lie in the maximal ideal of $\End(B_\l)$. Therefore $pi$ is invertible in $\End(B_\l)$. In particular, $ip$ is, up to invertible scalar, an idempotent in the
local ring $\End(B)$, so it too must be invertible. Therefore, $i$ and $p$ give isomorphisms between $B$ and $B_\l(-k)$. \end{proof}

Now we compute the trace decategorification under similar assumptions.

\begin{prop} \label{prop:traceofcellcat} Let $\Bbbk$ be a commutative complete local ring with perfect quotient field. Let $\CC$ be a nice graded $\Bbbk$-linear fibred SOACC. Let $\dot{\CC}$
denote its Karoubi envelope, and let $\CC^*$ denote the graded category with translation obtained from $\CC$, as in \cite[Section 2.4]{BHLW}. Then the canonical maps induce isomorphisms of
$\ZM[q,q^{-1}]$-modules \begin{equation} \label{traceofCdot} [\dot{\CC}] \ot_{\ZM} \Bbbk \simto \Tr(\CC), \end{equation} \begin{equation} \label{traceofCstar} \bigoplus_{\l \in \L} A_\l \simto \Tr(\CC^*). \end{equation}
Thus for an SOACC one has
\begin{equation} \label{traceofCstarSOACC} [\dot{\CC}] \ot_{\ZM[q,q^{-1}]}
R \simto \Tr(\CC^*), \end{equation}
 where in \eqref{traceofCstarSOACC} the specialization of $\ZM[q,q^{-1}]$ factors through $q=1$. \end{prop}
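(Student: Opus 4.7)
The plan is to prove \eqref{traceofCstar} as the main statement; the other two isomorphisms would follow by specialization. The strategy is to show the natural map $\iota\colon \bigoplus_\l A_\l \to \Tr(\CC^*)$, sending $a \in A_\l$ to the class of $a \in \End^*(\l)$, is an isomorphism. Surjectivity would come from a cyclic reduction against the cellular basis, and injectivity from passage to single-cell quotient categories.

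For surjectivity, given $f \in \End^*(X)$, I would expand in the cellular basis $f = \sum c^\l_{S, a_{S,T}, T} = \sum c_S \circ (a_{S,T} \circ c_T)$ and apply the trace relation to cyclically permute:
\[ [c_S \circ a_{S,T} \circ c_T] = [a_{S,T} \circ c_T \circ c_S] \in \Tr(\CC^*). \]
By Lemma \ref{pairmultSOAaCC}, the composite $c_T \circ c_S \in \End^*(\l)$ equals $\phi^\l(T, S) \in A_\l$ modulo $\CC_{<\l}$, so $[f]$ lies in the image of $\iota$ modulo the class of a residual endomorphism of $\l$ supported on strictly lower cells. Iterating on the residual and using the descending chain condition for termination shows $[f]$ is in the image of $\iota$.

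For injectivity, let $\CC^{[\l]}$ denote the quotient of $\CC^{\ge \l}$ by the ideal $\CC^{>\l}$: this is a fibred SOACC with the single cell $\l$, and composition of cellular basis elements is given exactly by the formula $c^\l_{S, a, T} \circ c^\l_{U, b, V} = c^\l_{S, a \phi^\l(T, U) b, V}$ (Lemma~\ref{pairmultSOAaCC}, no lower-cell corrections). Define
\[ \tau^\l\colon \bigoplus_X \End_{\CC^{[\l]}}(X) \to A_\l, \qquad \sum_{S, T} c^\l_{S, a_{S,T}, T} \mapsto \sum_{S, T} a_{S,T}\,\phi^\l(T, S). \]
A direct expansion of $fg$ and $gf$ combined with commutativity of $A_\l$ shows $\tau^\l(fg) = \tau^\l(gf)$, so $\tau^\l$ descends to $\Tr(\CC^{[\l]})$ and is a left inverse to $A_\l \to \Tr(\CC^{[\l]})$; combined with surjectivity applied to $\CC^{[\l]}$, this gives $\Tr(\CC^{[\l]}) \cong A_\l$. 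The quotient functors $\CC^* \to (\CC^*)^{[\l]}$ then assemble into $\Pi\colon \Tr(\CC^*) \to \bigoplus_\l A_\l$, and the composition $\Pi \circ \iota$ is the identity: for $a \in A_\l$ and $\mu \ne \l$, the image of $a \in \End(\l)$ in $(\CC^*)^{[\mu]}$ vanishes because either the object $\l$ is killed (if $\mu \not\le \l$) or the morphism at cell $\l$ is killed (if $\mu < \l$). So $\iota$ is split injective, hence an isomorphism.

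The specializations follow easily: \eqref{traceofCstarSOACC} by substituting $A_\l = R$ and observing that in $\CC^*$ the grading shifts become trivial, corresponding to the $q=1$ specialization of the $\ZM[q,q^{-1}]$-module structure. For \eqref{traceofCdot}, rerun the argument for degree-$0$ endomorphisms: each $A^0_\l \cong \Bbbk$ by niceness, while shifts $B_\l(k)$ remain distinct indecomposables of $\dot\CC$ (Proposition \ref{prop:grothofcellcat}), yielding $\Tr(\CC) \cong \bigoplus_\l \Bbbk[q, q^{-1}] \cong [\dot\CC] \otimes_\ZM \Bbbk$. The central technical challenge is verifying well-definedness of $\tau^\l$ on commutators; passage to $\CC^{[\l]}$ is the key device, since it strips away lower-cell corrections and exposes the commutative composition law that enables the cancellation between $\tau^\l(fg)$ and $\tau^\l(gf)$.
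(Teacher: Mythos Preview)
Your surjectivity argument matches the paper's: both use the trace relation to cyclically permute $c_S \circ a \circ c_T$ into $a \circ c_T \circ c_S \in \End(\l)$, then invoke the descending chain condition to terminate the iteration.

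The injectivity argument has a genuine gap. You assert that $\CC^{[\l]}$, the quotient of $\CC^{\ge \l}$ by the ``ideal'' $\CC^{>\l}$, is a well-defined category equipped with a quotient functor from $\CC^*$. But in a cellular category the span of cells indexed by a subset $I \subset \L$ is an ideal only when $I$ is \emph{downward}-closed; the set $\{\mu : \mu > \l\}$ is upward-closed in the poset $\{\mu : \mu \ge \l\}$, not downward-closed. Concretely, if $\mu, \nu > \l$, then a product $c^\mu_{S,a,T} \circ c^\nu_{U,b,V}$ lies in $\CC_{\le \mu} \cap \CC_{\le \nu}$ and can have a nonzero cell-$\l$ component. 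So there is no quotient functor $\CC^* \to \CC^{[\l]}$, and your assembled splitting $\Pi$ is not defined. Put differently, your $\tau^\l$ is a well-defined linear functional on the subquotient $\CC_{\le \l}/\CC_{<\l}$ (where Lemma~\ref{pairmultSOAaCC} holds on the nose), but this subquotient is not a quotient of $\CC$, so $\tau^\l$ does not automatically descend to $\Tr(\CC^*)$: commutators $gh - hg$ built from cells strictly above $\l$ can contribute to cell $\l$.

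The paper does not attempt to isolate a single cell. Instead it takes the cyclic-reduction step as a map $p \colon H \to H$, iterates to $\pi = p^k$ landing in $K = \bigoplus_\l A_\l$, and proves $\pi(gh) = \pi(hg)$ directly by induction over the partial order on the pair of cells supporting $g$ and $h$. The inductive step is precisely where the cross-cell contributions get absorbed; for $g$ in cell $\l$ and $h$ in cell $\mu$ with $\l \ne \mu$, one factors through morphisms lying in $\CC_{\le \l} \cap \CC_{\le \mu} \subset \CC_{< \max(\l,\mu)}$ and invokes the inductive hypothesis. This is indeed the ``central technical challenge'' you flagged, but it cannot be sidestepped by passing to a single-cell quotient category; one has to confront the mixed-cell terms head-on.
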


\begin{proof}
Let $H:=\bigoplus_{X \in \Ob(\mathcal{C}^{\ast})}\Hom(X,X)$ and consider the subgroup $K:= \bigoplus_{\lambda \in \Lambda} A_\l \langle 1_{\lambda} \rangle$.  We will use \cite[Proposition 3.4]{BHLZ} to deduce that $\Tr(\mathcal{C}^{\ast}) \cong K$.
To apply this proposition we must define a map $\pi \maps H \to K$ such that
\begin{enumerate}[(A)]
  \item $\pi(f) = f$ for all $f \in K$,
  \item there is an equality of trace classes $[\pi(f)]=[f]$ for all $f\in H$, and
  \item $\pi(gh)=\pi(hg)$ for every $g\in \Hom(X,Y)$ and $h \in \Hom(Y,X)$.
\end{enumerate}

We define a homomorphism $p \maps H \to H$.  Using the first axiom of a fibred SOACC we can write any $f\in \Hom(X,X)$ as
\[
 f = \sum_{\lambda, S,T} c_{S,a^\l_{S,T},T}^{\lambda}, \qquad a_{S,T}^{\lambda} \in A_\l
\]
where the sum is over $\lambda \in \Lambda$, $T \in E(X,\lambda)$, $S \in M(\lambda, X)$.  Recall that $c_{S,a,T}^{\lambda} := c_S \circ a \circ c_T$.  Define
\[
p(f) := \sum_{\lambda, S, T} a_{S,T}^{\lambda}\;  c_{T}\circ c_{S} = \sum_{\l, S, T} a_{S,T}^\l \phi^\l_X(S,T).
\]
It is clear that there is an equality of trace classes $[p(f)]=[f]$.  Using axiom (4) of an fibred SOACC it follows that  $p(f)=f$ for any $f\in K$.

Using the third axiom and the descending chain condition, it follows that for every $f\in H$ there exist a $k \geq 0$ such that $p^k(f)\in K$.  Let $\pi \maps H\to K$ be defined by $\pi(f) = p^k(f)$ where $k$ is chosen as above.  Since it is true for $p$, we have $\pi(f)=f$ for $f\in K$, and $[\pi(f)]=[f]$.  Finally, condition (C) above follows by induction over the partial order using axiom 3 of a fibred SOACC to deduce that $p(gh)=p(hg)$ modulo lower terms. For $g$ and $h$ both in a minimal cell $\l$, it follows by Lemma \ref{pairmultSOAaCC}.

A similar argument working in $\CC$ shows establishes the isomorphism \eqref{traceofCdot}, since the degree zero component of $A_\l$ is just $\Bbbk$.

For sake of completeness, we spell out the inductive proof of condition (C); the reader who believes it may skip to the next section. We wish to show that $p(gh) = p(hg)$ for all $g \in
\Hom(X,Y)$ and $h \in \Hom(Y,X)$. Clearly it suffices to let $g = c^\l_{S,a,T}$ and $h = c^\mu_{U,b,V}$ for some $\l,\mu,S,T,U,V,a,b$. As mentioned above, the case where $\l = \mu$ is
minimal follows from Lemma \ref{pairmultSOAaCC}. We now assume that $p(gh) = p(hg)$ whenever $g \in \CC_{< \l}$ and $h \in \CC_{\le \mu}$, or $g \in \CC_{\le \l}$ and $h \in \CC_{< \mu}$. We assume, without loss of generality, that $\l$ is not greater than $\mu$.

First suppose that $\l \ne \mu$. One has \[ gh = c_S a c_T c_U b c_V,\] expressing $gh$ as a composition of six morphisms. Note that $c_T c_U b c_V$ is in both ideals $\CC_{\le \l}$ and
$\CC_{\le \mu}$, so that it lies in the ideal $\CC_{< \mu}$. Hence $p(gh) = p(c_T c_U b c_V c_S a)$ by induction. However, $c_U b c_V c_S a$ is also in both ideals, so $p(c_T c_U b c_V c_S
a) = p(c_U b c_V c_S a c_T) = p(hg)$ by induction. Therefore $p(gh)=p(hg)$.

Now suppose that $\l = \mu$. One has
\[ c_T c_U = \phi^\l_X(T,U) + x \]
where $x \in \End(\l)$ is in $\CC_{< \l}$. Meanwhile,
\[ c_V c_S = \phi^\l_Y(V,S) + y \]
where $y \in \CC_{< \l}$. Thus
\[ p(gh) = p(c_S a \phi(T,U) b c_V + c_S a x b c_V) = p(a \phi(T,U) b c_V c_S + a x b c_V c_S). \]
The equality of first terms comes from the definition of $p$, while the equality of second terms comes from induction, using the fact that $a x b c_V$ is in $\CC_{< \l}$. Continuing the computation, we have
\[ p(gh) = p(a b \phi(T,U) \phi(V,S) + a b \phi(T,U) y + a b x \phi(V,S) + a b x y).\]
An identical computation shows that $p(hg)$ is equal to the same quantity.
\end{proof}

%

\section{Trace decategorification of $\DC$}
\label{sec-traceofD}

In Proposition \ref{prop:traceofcellcat} we have computed the underlying $\Bbbk$-module of $\Tr(\DC)$ and the underlying $R$-module of $\Tr(\DC^*)$. It remains to compute their monoidal
structures.

Because $\Tr(\DC)$ is generated by identity maps, the ring structure on $\Tr(\DC)$ will agree with that of $[\dot{\DC}]$ under the isomorphism of \eqref{traceofCdot}. Therefore, we have:

\begin{thm} \label{mainthm1} As a ring, $\Tr(\DC) \cong \HB \ot_{\ZM} \Bbbk$. Under this isomorphism, the identity $1_{B_s}$ of the Bott-Samelson bimodule $B_s$ is sent to the Kazhdan-Lusztig generator $b_s
= v(1+T_s)$, for each simple reflection $s$. \end{thm}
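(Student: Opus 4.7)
The plan is to derive the theorem from Proposition \ref{prop:traceofcellcat} essentially by observing that the isomorphism produced there is automatically multiplicative. The Hecke category $\DC$ is a nice graded $\Bbbk$-linear SOACC by the last example of \S\ref{subsec-egs}, so Proposition \ref{prop:traceofcellcat} supplies an isomorphism of $\ZM[q,q^{-1}]$-modules
\[
h_{\DC} \maps [\dot{\DC}] \ot_{\ZM} \Bbbk \;\simto\; \Tr(\DC),
\]
and inspection of its proof shows that $h_{\DC}$ is nothing other than the $\Bbbk$-linear extension of the Chern character $[X]\mapsto[1_X]$.

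The next step is to promote $h_{\DC}$ to a ring isomorphism. The ring structure on $[\dot{\DC}]$ is $[X]\cdot[Y]:=[X\ot Y]$, while the ring structure on $\Tr(\DC)$ is $[f]\cdot[g]:=[f\ot g]$. Since $1_{X\ot Y}=1_X\ot 1_Y$ for all objects $X,Y\in\DC$, we immediately compute
\[
h_{\DC}([X]\cdot[Y]) = [1_{X\ot Y}] = [1_X\ot 1_Y] = h_{\DC}([X])\cdot h_{\DC}([Y]),
\]
so $h_{\DC}$ is a ring homomorphism; combined with the module isomorphism, it is a ring isomorphism.

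Finally, I would invoke the Soergel categorification theorem \cite{Soe3}, extended to arbitrary Coxeter systems in \cite{EWGR4SB}, which identifies $[\dot{\DC}]\cong\HB$ as rings and sends the Bott-Samelson class $[B_s]$ to the Kazhdan-Lusztig generator $b_s = v(1+T_s)$. Composing with $h_{\DC}$ produces the desired ring isomorphism $\HB\ot_{\ZM}\Bbbk\simto\Tr(\DC)$, and tracks $[1_{B_s}]=h_{\DC}([B_s])$ to $b_s\ot 1$. The only substantive work has already been packaged into Proposition \ref{prop:traceofcellcat}; once that is in hand, the multiplicativity of the Chern character is essentially a tautology and the identification of the Grothendieck group is standard input, so there is no genuine obstacle left to overcome.
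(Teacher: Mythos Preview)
Your proof is correct and follows essentially the same approach as the paper: the paper's argument, given in the paragraph immediately preceding the theorem, is simply that $\Tr(\DC)$ is generated by identity maps (the content of Proposition \ref{prop:traceofcellcat}), so the ring structure is inherited from $[\dot{\DC}] \cong \HB$. Your explicit verification that the Chern character is multiplicative via $1_{X\ot Y}=1_X\ot 1_Y$ just spells out what the paper leaves implicit.
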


Similarly, $\Tr(\DC^*)$ is generated by identity maps, together with $\End(\1) = R$, the endomorphism ring of the monoidal identity, also known as the center $Z(\DC)$. The ring structure on
the subring generated by identity maps agrees with the ring structure on $[\dot{\DC}]$ specialized at $q=1$; this is $\HB / (q-1) = \ZM[W]$, the group algebra of the Coxeter group $W$. To
avoid confusion, we denote the standard basis of the group algebra by $\{T_w\}_{w \in W}$, which agrees with the specialization of the standard basis $\{T_w\}$ inside the Hecke algebra. Thus
$1_{B_s}$ is sent to $(1+T_s)$. Meanwhile, the ring structure on $R \subset \Tr(\DC^*)$ agrees with the usual ring structure on $\End(\1)$. It remains to compute the cross relations between
$W$ and $R$, a computation which will only involve relations (5.1) and (5.2) from \cite{EWGR4SB}.

Let us place (5.2) on an annulus to compute its implication inside $\Tr(\DC^*)$.

\begin{equation}
{
\labellist
\small\hair 2pt
 \pinlabel {$f$} [ ] at 14 62
 \pinlabel {$sf$} [ ] at 204 62
 \pinlabel {$\pa_s(f)$} [ ] at 352 62
 \pinlabel {$=$} [ ] at 145 62
 \pinlabel {$+$} [ ] at 309 62
\endlabellist
\centering
\ig{.8}{polyforceonannulus}
}
\end{equation}

Using (5.1), the final diagram is equal to the polynomial $\pa_s(f) \a_s$, which by definition of divided difference operators is equal to $f - s\cdot f$. Therefore we obtain the relation
\begin{equation} \label{semidirectusingB} f 1_{B_s} = 1_{B_s} s \cdot f + (f - s \cdot f). \end{equation}
Rewriting this using $1_{B_s} = (1 + T_s)$, and subtracting $f$ from both sides, one has
\begin{equation} \label{semidirectusingT} f T_s = T_s s \cdot f. \end{equation}
This is exactly the relation in the semi-direct product $\ZM[W] \ltimes R$.

\begin{thm} \label{mainthm2} As a ring, $\Tr(\DC^*) \cong \ZM[W] \ltimes R$. \end{thm}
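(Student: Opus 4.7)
My plan is to construct a ring homomorphism
\[
\phi \co \ZM[W] \ltimes R \longrightarrow \Tr(\DC^*)
\]
by sending $f \in R$ to its class in $\End(\1) \subset \Tr(\DC^*)$ and sending each group-algebra generator $T_s$ to $1_{B_s} - 1$, and then to upgrade $\phi$ to an isomorphism by comparing underlying $R$-module structures. The universal property of the semi-direct product reduces the well-definedness of $\phi$ to three checks: (i) the group-algebra relations among the $T_s$, (ii) the cross relation $fT_s = T_s(s \cdot f)$, and (iii) the compatibility of $R \hookrightarrow \End(\1)$ with its ring structure.

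For (ii), the relation is precisely equation \eqref{semidirectusingT}, which has already been derived from polynomial forcing placed on an annulus. Item (iii) is tautological. The main content is (i). Here I would use that the inclusion $\DC \hookrightarrow \DC^*$ induces a ring homomorphism $\Tr(\DC) \to \Tr(\DC^*)$; since all grading shifts become isomorphic in $\DC^*$, this map factors through the specialization $q = 1$. Invoking Theorem \ref{mainthm1} and specializing $b_s = v(1 + T_s)$ at $v = 1$, one sees that the subring generated by the classes $\{1_{B_s}\}$ in $\Tr(\DC^*)$ is $\ZM[W] \otimes_\ZM \Bbbk$, so in particular the $1_{B_s} - 1$ satisfy the braid and quadratic relations of the group algebra.

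To finish, I would compare $R$-module structures. Proposition \ref{prop:traceofcellcat} exhibits $\Tr(\DC^*)$ as a free $R$-module of rank $|W|$, with basis $\{[1_{\un{w}}]\}$ indexed by chosen reduced expressions. Each such basis element equals $\prod_i 1_{B_{s_i}}$, which lies in the image of $\phi$, giving surjectivity; the source is manifestly free of the same rank. For injectivity, observe that in $\HB$ the transition between the standard basis $\{T_w\}$ and the Bott--Samelson basis $\{b_{\un{w}}\}$ is unitriangular with respect to the Bruhat order (since expanding $b_{\un{w}} = v^{\ell(\un{w})}\prod_i(1 + T_{s_i})$ sums $T_{w'}$ over subexpressions, with the full subexpression contributing $T_w$ and all others contributing $w' < w$). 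This unitriangularity persists after specialization and extension of scalars to $R$, so $\{\phi(T_w)\}$ is an $R$-basis of $\Tr(\DC^*)$.

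The only subtlety, rather than a true obstacle, is ensuring the direction of the $R$-action used in Proposition \ref{prop:traceofcellcat} (right $R$-linearity of morphism spaces) matches the semi-direct product convention on the target. The cross-relation \eqref{semidirectusingT} already encodes this compatibility, so once conventions are aligned the argument is essentially an assembly of results already in hand.
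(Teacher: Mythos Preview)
Your proposal is correct and takes essentially the same approach as the paper. The paper's proof is extremely terse---it just cites Proposition~\ref{prop:traceofcellcat} for the underlying module isomorphism and the preceding discussion (identity maps generate a copy of $\ZM[W]$ via Theorem~\ref{mainthm1} at $q=1$, $R$ sits in as $\End(\1)$, and the cross relation~\eqref{semidirectusingT}) for the ring structure---while you spell out the same ingredients explicitly as a constructed homomorphism plus a rank/unitriangularity check.
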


\begin{proof} The isomorphism of underlying vector spaces followed from Proposition \ref{prop:traceofcellcat}. The compatibility with the ring structure follows from the discussion above.
\end{proof}

Finally, we consider the canonical action of the trace of a category on its center. Given an element of the trace, viewed as a diagram on the annulus (with empty boundary), and an element of
the center, viewed as a diagram on the circle (with empty boundary), the action is to wrap the annulus around the circle to obtain another circle. However, because
\begin{equation} \ig{.5}{circzero} \quad = \quad 0, \end{equation}
one sees that the action of $1_{B_s}$ sends $1 \in Z(\DC)$ or $1 \in Z(\DC^*)$ to zero. The representation of $\HB$ (resp. its quotient $\ZM[W]$) where the Kazhdan-Lusztig generator $[B_s] =
v(1+T_s)$ is sent to zero is called the \emph{sign representation}, on which $T_s$ acts by $-1$.

\begin{thm} \label{mainthm3} The action of $\Tr(\DC)$ on $Z(\DC)$ is the sign representation of $\HB \ot_{\ZM} \Bbbk$. The action of $\Tr(\DC^*)$ on $Z(\DC^*)$ is the induction of the sign representation of
$\ZM[W]$ to $\ZM[W] \ltimes R$. \end{thm}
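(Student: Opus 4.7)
The plan is to verify the assertions by computing the action of $\Tr$ on a $\Bbbk$-algebra generating set and matching against the explicit description of the (induced) sign representation.

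For the first statement, $Z(\DC) = \End^0(\1) = R^0 = \Bbbk$ is one-dimensional over $\Bbbk$, so the module action gives a $\Bbbk$-algebra character $\chi \maps \Tr(\DC) \to \Bbbk$. Under the ring isomorphism $\Tr(\DC) \cong \HB \ot_\ZM \Bbbk$ of Theorem \ref{mainthm1}, this is a character of $\HB \ot_\ZM \Bbbk$. As observed just before the theorem statement, the vanishing of the empty $s$-circle on a disk forces $\chi(1_{B_s}) = 0$; using $1_{B_s} = b_s = v(1 + T_s)$ and the invertibility of $v$, we deduce $\chi(T_s) = -1$ for every simple reflection $s \in S$. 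Since $T_s \mapsto -1$ manifestly satisfies the braid relations (both sides evaluate to $(-1)^{m_{st}}$) and the quadratic relation of the Hecke algebra, these values define a character, which is precisely the sign representation.

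For the second statement, by Theorem \ref{mainthm2} we have $\Tr(\DC^*) \cong \ZM[W] \ltimes R$ and $Z(\DC^*) = R$. The induced representation $\mathrm{Ind}_{\ZM[W]}^{\ZM[W] \ltimes R}(\mathrm{sgn})$ has underlying $R$-module $R$ via the isomorphism $(\ZM[W] \ltimes R) \ot_{\ZM[W]} \mathrm{sgn} \cong R$, and on this space $R$ acts by multiplication while $T_s$ acts as $f \mapsto -s(f)$. It suffices to match these two pieces of the action.

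First, $R \subset \Tr(\DC^*)$ acts on $R = Z(\DC^*)$ by multiplication: the class in $\Tr(\DC^*)$ of $g \in \End(\1)$ is represented by an annulus carrying the polynomial $g$, and wrapping this around a disk carrying $f$ produces the disk carrying $gf$. Second, the vanishing of the empty $s$-circle on a disk gives $1_{B_s} \cdot 1 = 0$, whence $T_s \cdot 1 = -1$. For general $f \in R$, combining the module axiom with the semi-direct product relation $T_s f = s(f) T_s$ (which is an immediate consequence of \eqref{semidirectusingT}), we obtain
\[
T_s \cdot f \;=\; T_s \cdot (f \cdot 1) \;=\; (T_s f) \cdot 1 \;=\; (s(f) T_s) \cdot 1 \;=\; s(f) \cdot (T_s \cdot 1) \;=\; -s(f),
\]
matching the induced representation's action of $T_s$ exactly.

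No substantial obstacle arises: the essential inputs beyond Theorems \ref{mainthm1} and \ref{mainthm2} are the diagrammatic circle-vanishing relation already highlighted in the paper, and the immediate observation that wrapping an annular polynomial around a central polynomial computes their product in $R$.
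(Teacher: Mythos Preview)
Your proof is correct and follows essentially the same approach as the paper: the key input is the vanishing of the $s$-colored circle, which gives $1_{B_s} \cdot 1 = 0$, and the rest is bookkeeping with the ring structures identified in Theorems \ref{mainthm1} and \ref{mainthm2}. The paper leaves the second statement largely to the reader after the discussion preceding the theorem; your explicit verification that $R$ acts by multiplication and that $T_s \cdot f = -s(f)$ via the module axiom and \eqref{semidirectusingT} is exactly the intended elaboration.
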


\bibliographystyle{plain}
\bibliography{GR4allSB}

\end{document}